\newtheorem{thm}{Theorem}[section]
\newtheorem{lem}[thm]{Lemma}
\newtheorem{rmk}[thm]{Remark}
\newcommand{\Rr}{{\mathbb{R}}}
\newcommand{\1}{\partial}
\begin{document}
\title{Another proof of the existence of homothetic solitons of the inverse mean curvature flow}
\author{Shu-Yu Hsu\\
Department of Mathematics\\
National Chung Cheng University\\
168 University Road, Min-Hsiung\\
Chia-Yi 621, Taiwan, R.O.C.\\
e-mail: shuyu.sy@gmail.com}
\date{Jan 18, 2020}
\smallbreak \maketitle
\begin{abstract}
We will give a new proof of the existence of non-compact homothetic solitons of the inverse mean curvature flow (cf. \cite{DLW}) in $\mathbb{R}^n\times \mathbb{R}$, $n\ge 2$, of the form $(r,y(r))$ or $(r(y),y)$ where $r=|x|$, $x\in\mathbb{R}^n$, is the radially symmetric coordinate and $y\in \mathbb{R}$. More precisely for any $\frac{1}{n}<\lambda<\frac{1}{n-1}$ and $\mu<0$, we will give a new proof of the existence of a unique solution $r(y)\in C^2(\mu,\infty)\cap C([\mu,\infty))$  of the equation $\frac{r_{yy}(y)}{1+r_y(y)^2}=\frac{n-1}{r(y)}-\frac{1+r_y(y)^2}{\lambda(r(y)-yr_y(y))}$, $r(y)>0$, in $(\mu,\infty)$ which satisfies $r(\mu)=0$ and $r_y(\mu)=\lim_{y\searrow\mu}r_y(y)=+\infty$. We also prove that there exist constants $y_2>y_1>0$  such that $r_y(y)>0$ for any $\mu<y<y_1$, $r_y(y_1)=0$, $r_y(y)<0$ for any $y>y_1$,  $r_{yy}(y)<0$ for any $\mu<y<y_2$,  $r_{yy}(y_2)=0$ and  $r_{yy}(y)>0$ for any $y>y_2$. Moreover $\lim_{y\to +\infty}r(y)=0$ and $\lim_{y\to +\infty}yr_y(y)=0$. 

\end{abstract}

\vskip 0.2truein

Key words: inverse mean curvature flow, non-compact homothetic solitons, existence, asymptotic behaviour

AMS 2010 Mathematics Subject Classification: Primary 35K67, 35J75 Secondary 53C44

\vskip 0.2truein
\setcounter{section}{0}

\section{Introduction}
\setcounter{equation}{0}
\setcounter{thm}{0}

A one-parameter family of immersions $F:M^n\times [0,T)\to\Rr^{n+1}$ of $n$-dimensional hypersurfaces in $\Rr^{n+1}$ is  said to satisfy the inverse mean curvature flow if $M_t=F_t(M^n)$, $F_t(x)=F(x,t)$,  satisfies
\begin{equation*}
\frac{\1}{\1 t}F(x,t)=-\frac{\nu(x,t)}{H(x,t)}\quad\forall x\in M^n, 0<t<T
\end{equation*}
where $H(x,t)$ and $\nu (x,t)$ are the mean curvature and unit interior normal of the surface $M_t$ at the point $F(x,t)$. This flow has been extensively studied in the compact case by C.~Gerhardt \cite{G}, G.~Huisken and T.~Ilmanen \cite{HuI4} and J.~Urbas \cite{U}, etc. C.~Gerhardt \cite{G} and J.~Urbas \cite{U} proved independently  the existence of unique solution of the inverse mean curvature flow when the initial data is a closed star-shaped hypersurface with strictly positive mean curvature. They also  proved independently that under the inverse mean curvature flow  a closed star-shaped hypersurface with strictly positive mean curvature  converges to a homothetically expanding sphere as time goes to infinity. 

For  non-starshaped initial hypersurfaces it is known that  singularities may develop (\cite{HuI1},\cite{S}) under the inverse mean curvature flow. For example Smoczyk \cite{S} proved that  such singularities can occur when the dimension of the hypersurface is equal to two and the mean curvature tends to zero somewhere during the evolution by the inverse mean curvature flow. 

In  \cite{HuI1} and  \cite{HuI3}, G.~Huisken and T.~Ilmanen used a level set approach to study the inverse mean curvature flow and proved the famous Riemannian Penrose inequality.  In \cite{BN} H.L.~Bray and A.~Neves  used the inverse mean curvature flow to prove the Poincare conjecture for $3$-manifolds with $\sigma$-invariant greater than $\mathbb{RP}^3$.
There are also study on the inverse mean curvature flow for the non-compact case recently by B.~Choi, P.~Daskalopoulos, G.~Huisken and T.~Ilmanen \cite{CD}, \cite{DH}, \cite{HuI2}, G.~Drugan, H.~Lee and G.~Wheeler \cite{DLW} and K.M.~Hui \cite{H1}, \cite{H2}, etc.

Since solutions  of geometric flows in the large time limit or near the blow-up points of the solutions usually behave like some self-similar solutions or solitons of the geometric flows, in order to understand the asymptotic properties
of the geometric flows it is important to first understand the properties of the solitons of the geometric flow. Existence of various solitons are proved recently by G.~Drugan, H.~Lee and G.~Wheeler \cite{DLW},  G.~Huisken and T.~Ilmanen \cite{HuI2}, K.M.~Hui \cite{H1}, \cite{H2}, and D.~Kim and J.~Pyo \cite{KP1}, \cite{KP2}. 

We say that a  $n$-dimensional submanifold $\Sigma$ of $\Rr^{n+1}$ with immersion $X:\Sigma\to\Rr^{n+1}$ and non-vanishing mean curvature $H$ is a homothetic soliton for the inverse mean curvature flow if there exists a constant $\lambda\ne 0$ such that
\begin{equation}\label{homothetic-soliton-equiv-defn}
<H\nu, X>=-\frac{1}{\lambda}\quad\forall X\in\Sigma.
\end{equation}
Note that if $(X,\Sigma)$ is a homothetic soliton which satisfies \eqref{homothetic-soliton-equiv-defn} for some constant $\lambda\ne 0$, then the function
\begin{equation*}
F(x,t)=e^{\lambda t}X(x)
\end{equation*}
is a solution of the inverse mean curvature flow. As observed by G.~Drugan, H.~Lee and G.~Wheeler \cite{DLW} and  K.M.~Hui \cite{H1}, \cite{H2}, if the homothetic soliton of the inverse mean curvature flow is a radially symmetric solution in $\Rr^n\times \Rr$, $n\ge 2$, of the form $(r,y(r))$ or $(r(y),y)$ where $r=|x|$, $x\in\Rr^n$, is the radially symmetric coordinate, $y\in\Rr$, then  $r(y)$ satisfies the equation
\begin{equation}\label{r-eqn}
\frac{r_{yy}(y)}{1+r_y(y)^2}=\frac{n-1}{r(y)}-\frac{1+r_y(y)^2}{\lambda(r(y)-yr_y(y))}\,\,\,,\quad r(y)>0
\end{equation}
or equivalently $y(r)$ satisfies the equation,
\begin{equation}\label{y-eqn}
y_{rr}+\frac{n-1}{r}\cdot(1+y_r^2)y_r-\frac{(1+y_r^2)^2}{\lambda(ry_r-y)}=0
\end{equation}
where $r_y(y)=\frac{dr}{dy}$, $r_{yy}(y)=\frac{d^2r}{dy^2}$ and $y_r(r)=\frac{dy}{dr}$, $y_{rr}(r)=\frac{d^2y}{dr^2}$ etc.

In this paper we will prove the following results.

\begin{thm}\label{existence-thm}
For any $n\ge 2$, $\frac{1}{n}<\lambda<\frac{1}{n-1}$, $\mu<0$,  there exists a unique solution $r(y)\in C^2(\mu,+\infty)\cap C([\mu,+\infty))$ of 
\begin{equation}\label{r-initial-value-problem}
\left\{\aligned
&\frac{r_{yy}(y)}{1+r_y(y)^2}=\frac{n-1}{r(y)}-\frac{1+r_y(y)^2}{\lambda(r(y)-yr_y(y))},\quad r(y)>0,\quad\mbox{ in }(\mu,+\infty)\\
&r(\mu)=0,r_y(\mu)=\lim_{y\searrow\mu}r_y(y)=+\infty 
\endaligned\right.
\end{equation}
which satisfies
\begin{equation}\label{r-r-derivative-expression-positive}
r(y)-yr_y(y)>0\quad\forall y>\mu.
\end{equation}
\end{thm}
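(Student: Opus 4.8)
The plan is to split the interval $(\mu,+\infty)$ into a singular regime near the left endpoint $y=\mu$ and a regular interior, and to treat them with different tools. Near $y=\mu$ the graph $(r(y),y)$ has a vertical tangent ($r_y\to+\infty$ while $r\to0$), so \eqref{r-eqn} is \emph{not} a classical initial value problem there; the natural remedy is to pass to the inverse function $y(r)$, which satisfies \eqref{y-eqn} with the regular-looking data $y(0)=\mu$, $y_r(0)=0$. Away from $\mu$, as long as $r>0$ and $r-yr_y>0$, the right-hand side of \eqref{r-eqn} is a smooth function of $(y,r,r_y)$, so \eqref{r-eqn} is an ordinary second order ODE to which Picard--Lindel\"{o}f applies directly. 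Note that the eventual turning point where $r_y=0$ is harmless for the $r(y)$ formulation, since there $r-yr_y=r>0$; hence after the single change of variables near $\mu$ no further coordinate switch is needed.

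First I would establish local existence and uniqueness near $r=0$ for \eqref{y-eqn}. Writing $p=y_r$ and using the integrating factor $r^{n-1}$, the equation becomes $(r^{n-1}p)'=r^{n-1}(1+p^2)^2/(\lambda(rp-y))-(n-1)r^{n-2}p^3$, which together with $y'=p$ I recast as the coupled integral system
\begin{equation*}
p(r)=\frac{1}{r^{n-1}}\int_0^r\Bigl[s^{n-1}\frac{(1+p^2)^2}{\lambda(sp-y)}-(n-1)s^{n-2}p^3\Bigr]\,ds,\qquad y(r)=\mu+\int_0^r p(s)\,ds .
\end{equation*}
Since $sp-y\to-\mu>0$ and $p\to0$ as $s\to0$, the integrand is controlled once one works in the class $|p(s)|\le Cs$, where $p^3/s=O(s^2)$ is harmless; a contraction mapping argument on $C([0,\delta])$ for $\delta$ small then produces a unique fixed point with leading behaviour $p(r)=r/(n\lambda|\mu|)+o(r)$, i.e.\ $y(r)=\mu+r^2/(2n\lambda|\mu|)+\cdots$. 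An indicial analysis of the linear part $p'+\tfrac{n-1}{r}p=0$ (whose solutions are $p\equiv0$ and $p\sim r^{-(n-1)}$) shows the prescribed condition $y_r(0)=0$ excludes the singular branch, which is what makes the regular solution, and hence the solution of \eqref{r-initial-value-problem} near $\mu$, unique. Bootstrapping the integral identities upgrades $y(r)$ to $C^\infty(0,\delta]$; since $p=y_r>0$ there, inverting gives $r(y)\in C^2$ on $(\mu,\mu+\varepsilon]$ with $r(\mu)=0$, $r_y\to+\infty$, and $C([\mu,\mu+\varepsilon])$ regularity. Finally $r-yr_y=(rp-y)/p$ with $rp-y\to-\mu>0$, $p\to0^+$, so $r-yr_y\to+\infty$ and \eqref{r-r-derivative-expression-positive} holds near $\mu$.

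Next I would continue the solution. Picard--Lindel\"{o}f gives a maximal interval $(\mu,Y)$ on which $r>0$, $r-yr_y>0$ and \eqref{r-eqn} holds; it remains to prove $Y=+\infty$ and that \eqref{r-r-derivative-expression-positive} persists. The main tool is the identity $(r-yr_y)'=-y\,r_{yy}$. Suppose $\phi:=r-yr_y$ has a first zero $y^\ast\in(\mu,Y)$. If $y^\ast>0$, then as $y\to y^{\ast-}$ the term $(1+r_y^2)/(\lambda\phi)\to+\infty$ forces $r_{yy}\to-\infty$ in \eqref{r-eqn}, whence $\phi'=-y^\ast r_{yy}\to+\infty>0$, contradicting $\phi\searrow0$; and $y^\ast=0$ is impossible since $\phi(0)=r(0)>0$. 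Thus the only way $\phi$ could vanish is $y^\ast<0$, which by $r_y(y^\ast)=r/y^\ast<0$ requires a turning point of $r$ already in $(\mu,0)$. Excluding this means showing $r$ is increasing on $(\mu,0]$; here the structural hypotheses on $\lambda$ enter: the upper bound $\lambda<\tfrac1{n-1}$ makes $r_{yy}=((n-1)\lambda-1)/(\lambda r)<0$ at every critical point of $r$, so each is a strict local maximum and $r_y$ changes sign exactly once (at some $y_1$), while the lower bound $\lambda>\tfrac1n$ is what forces this turning point to occur at $y_1>0$. With $\phi>0$ secured, the dominant $\tfrac{n-1}{r}$ term yields a differential inequality preventing $r$ from reaching $0$ at finite $y$, the boundedness of $r$ (one maximum) and of $r_y$ on compact $y$-intervals rules out $r_y\to\pm\infty$ and $r\to\infty$, and $\phi$ stays bounded below on compacta; hence no breakdown occurs and $Y=+\infty$. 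Uniqueness on all of $(\mu,+\infty)$ then follows from the uniqueness near $\mu$ together with Picard--Lindel\"{o}f.

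The principal obstacle is the singular endpoint $y=\mu$: because $r(\mu)=0$ simultaneously blows up the coefficient $\tfrac{n-1}{r}$ and the slope $r_y$, both existence and, especially, uniqueness must be extracted from the Fuchsian structure of \eqref{y-eqn} rather than quoted from standard theory. The secondary, and more laborious, difficulty is the global sign analysis needed to keep $r-yr_y$ strictly positive while excluding finite-$y$ blow-up; the clean part (ruling out a first zero of $r-yr_y$ at $y^\ast\ge0$) follows from the identity $(r-yr_y)'=-y\,r_{yy}$, but locating the turning point at $y_1>0$, which is exactly where the hypothesis $\lambda>\tfrac1n$ is used, is where the delicate estimates lie.
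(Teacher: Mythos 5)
Your overall skeleton---invert to $y(r)$ near the singular endpoint $y=\mu$, prove local existence and uniqueness there by a fixed-point argument, then continue in the $r(y)$ chart and run a sign analysis on $\phi:=r-yr_y$---is the same as the paper's (the paper quotes Lemma 2.1 and Lemma 2.2 of \cite{H1} for the local step, and uses the identity $(r-yr_y)_y=-yr_{yy}$ exactly as you do). The genuine gap is in the continuation argument. The dangerous breakdown mode at a finite right endpoint $y_0$ is not the one you treat (a zero of $\phi$ while $r$ stays bounded away from $0$ --- your barrier argument handles that case correctly), but the simultaneous collapse $r\to 0$, $r_y\to 0$, $\phi\to 0$. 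This mode is not exotic: along the true solution one has $r\to 0$ and $yr_y\to 0$, hence $\phi\to 0$, as $y\to+\infty$, and the whole difficulty is to show that this collapse cannot occur at a \emph{finite} $y_0$. In that regime the two terms on the right-hand side of \eqref{r-eqn} blow up with opposite signs, so the phrase ``the dominant $\frac{n-1}{r}$ term yields a differential inequality preventing $r$ from reaching $0$ at finite $y$'' assumes exactly what has to be proved. Likewise, ``boundedness of $r$ and of $r_y$ on compact $y$-intervals'' is automatic from continuity and gives no uniform bound as $y\to y_0^-$; even ruling out $r_y\to-\infty$ at a finite endpoint requires a quantitative estimate (in the paper, Case 1 of Claim 1: when $r_y\to-\infty$ one has $\frac{1+r_y^2}{(r-yr_y)r_y}\to-\frac{1}{y_0}$, and an arctangent integration then bounds $r_y$ below). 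The paper's existence proof is long precisely because of this: it first propagates convexity ($r_{yy}>0$ past the inflection point, by differentiating \eqref{r-eqn} and evaluating $r_{yyy}$ at zeros of $r_{yy}$), so that $a_1=\lim_{y\to y_0}r(y)$ and $a_2=\lim_{y\to y_0}r_y(y)$ exist; then, assuming $y_0<\infty$, it shows $a_1=0$ (otherwise the solution extends past $y_0$ by Lemma \ref{r-monotone-lemma2} and the results of \cite{H2}), then $a_2=0$ (a logarithmic integral estimate), and finally excludes all three possible behaviours of $r_{yy}$ near $y_0$ (limit $0$, limit $+\infty$, or a finite positive sequential limit) via l'Hospital-type identities such as $\lim r\,r_{yy}=n-1$. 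Nothing in your sketch substitutes for this analysis.

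A second, smaller but real, defect is the attribution of the hypotheses on $\lambda$. You assert that the lower bound $\lambda>\frac{1}{n}$ ``is what forces the turning point to occur at $y_1>0$,'' and you offer no argument for it. In the paper the positivity of the turning height is Lemma \ref{y-limit-positive-bd}, and its proof uses the \emph{upper} bound: the key sign condition is $\lambda^{-1}-(n-1)>0$, fed into an arctangent estimate that contradicts $y_r\to+\infty$ if the turning height were $\le 0$; the same upper bound is what makes $y_r$ blow up at finite $r$ in the first place (Lemma \ref{local-maximal-existence-lem}) and what makes every critical point of $r$ a strict local maximum. Since excluding a zero of $\phi$ at negative $y$ in your scheme hinges exactly on the turning point having positive height, this step cannot be left as an unproved assertion --- and as stated it points at the wrong hypothesis.
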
  

\begin{thm}\label{asymptotic-property-thm}
For any $n\ge 2$, $\frac{1}{n}<\lambda<\frac{1}{n-1}$, $\mu<0$,  let  $r(y)\in C^2(\mu,+\infty)\cap C([\mu,+\infty))$ be the  unique solution of \eqref{r-initial-value-problem} given by Theorem \ref{existence-thm} which satisfies \eqref{r-r-derivative-expression-positive}. Then  
there exist constants $y_2>y_1>0$ such that 
\begin{equation}\label{first-derivative-ineqn}
\left\{\aligned
&r_y(y)>0\quad\forall\mu<y<y_1\\
&r_y(y_1)=0\\
&r_y(y)<0\quad\forall y>y_1, 
\endaligned\right.
\end{equation}
\begin{equation}\label{second-derivative-ineqn}
\left\{\aligned
&r_{yy}(y)<0\quad\forall \mu<y<y_2\\
&r_y(y_2)=0\\
&r_{yy}(y)>0\quad\forall y>y_2, 
\endaligned\right.
\end{equation}
\begin{equation}\label{r'-limit-as-y-infty}
\lim_{y\to +\infty}yr_y(y)=0,
\end{equation}
and
\begin{equation}\label{r-limit-as-y-infty}
\lim_{y\to +\infty}r(y)=0.
\end{equation}
\end{thm}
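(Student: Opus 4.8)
My plan is to read off the sign of $r_{yy}$ from a single auxiliary function and to exploit the constraint $r-yr_y>0$ supplied by Theorem~\ref{existence-thm}. Rewriting \eqref{r-eqn} as
\[
r_{yy}=\frac{1+r_y^2}{\lambda r\,(r-yr_y)}\,\Psi,\qquad \Psi:=(n-1)\lambda(r-yr_y)-r(1+r_y^2),
\]
and using $r>0$, $\lambda>0$, $r-yr_y>0$, I get $\operatorname{sign}r_{yy}=\operatorname{sign}\Psi$. The key elementary observation is that at any point $y_0$ with $r_y(y_0)=0$ one has $\Psi(y_0)=r(y_0)[(n-1)\lambda-1]<0$, since $\lambda<\frac1{n-1}$; hence $r_{yy}(y_0)<0$, i.e.\ every critical point of $r$ is a strict local maximum, so $r$ has at most one critical point. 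I will also record the differentiated identity, whose only feature I need is that \emph{at a zero of $\Psi$} it reduces to $\Psi'=-r_y(1+r_y^2)$.

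Next I will pin down the sign of $r_y$. A local analysis at the singular endpoint (as in the proof of Theorem~\ref{existence-thm}, where $r\to0$, $r_y\to+\infty$ and $rr_y\to-n\lambda\mu$) shows $r_{yy}<0$ for $y$ near $\mu$, so $r$ starts off increasing and concave. To produce $y_1$ I must rule out $r_y>0$ on all of $(\mu,+\infty)$. If that held, then $\Psi<0$ near $\mu$ together with $\Psi'=-r_y(1+r_y^2)<0$ at every zero of $\Psi$ forces $\Psi<0$, hence $r_{yy}<0$, throughout; thus $r_y$ decreases to some $L\ge0$ and, integrating $\frac{r_{yy}}{1+r_y^2}=\frac{d}{dy}\arctan r_y$, I obtain $\int_\mu^{\infty}\big[\frac{1+r_y^2}{\lambda(r-yr_y)}-\frac{n-1}{r}\big]\,dy=\frac\pi2-\arctan L<\infty$. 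But $(r/y)'=-(r-yr_y)/y^2<0$ gives $r\le Cy$ for large $y$, and $r-yr_y\le r$ there, so the integrand is $\ge\frac{1-(n-1)\lambda}{\lambda r}\ge \frac{c}{y}$ with $c>0$, a divergent tail — a contradiction. Therefore $r_y$ vanishes; by the previous paragraph it vanishes exactly once, at $y_1$, with $r_y>0$ on $(\mu,y_1)$ and, no second maximum being possible, $r_y<0$ on $(y_1,+\infty)$, which is \eqref{first-derivative-ineqn}.

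For the convexity statement I argue on $(y_1,+\infty)$, where $r_y<0$: at any zero of $\Psi$ now $\Psi'=-r_y(1+r_y^2)>0$, so $\Psi$ can change sign only from $-$ to $+$ and does so at most once; since $\Psi(y_1)<0$, $r_{yy}<0$ just past $y_1$. The change of sign must occur at a finite $y_2$, for otherwise $r_{yy}<0$ on all of $(\mu,+\infty)$, forcing $r_y$ to decrease to a negative limit and hence $r\to-\infty$, impossible. This yields $y_2>y_1$ with $r_{yy}<0$ on $(\mu,y_2)$, $r_{yy}(y_2)=0$, $r_{yy}>0$ on $(y_2,+\infty)$, proving \eqref{second-derivative-ineqn}. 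Finally, for the asymptotics I use $G:=r-yr_y$: on $(y_2,+\infty)$, $G'=-yr_{yy}<0$, so $G\to G_\infty\ge0$, while $r\searrow r_\infty\ge0$ and $r_y\uparrow0$ (a negative limit would again give $r\to-\infty$). Since $yr_y=r-G\to r_\infty-G_\infty$, any nonzero value would make $r_y\sim(r_\infty-G_\infty)/y$ non-integrable and send $r$ to $\pm\infty$; hence $r_\infty=G_\infty$ and $\lim_{y\to\infty}yr_y=0$, i.e.\ \eqref{r'-limit-as-y-infty}. If $r_\infty>0$, letting $y\to\infty$ in \eqref{r-eqn} gives $r_{yy}\to\frac{(n-1)\lambda-1}{\lambda r_\infty}<0$, contradicting $r_{yy}>0$; so $r_\infty=0$, whence $G_\infty=0$ as well, establishing \eqref{r-limit-as-y-infty}.

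The part I expect to be genuinely delicate is the strict positivity $y_1>0$ (equivalently $r_y>0$ on $(\mu,0]$), which is exactly where the lower bound $\lambda>\frac1n$ must enter; the one-sided estimates above are too lossy near $\mu$ (there $r-yr_y\sim|\mu|r_y\gg r$), so I anticipate needing a sharper analysis of the solution near the singular endpoint — tracking $m:=rr_y$, with $m(\mu^+)=-n\lambda\mu$ and $m(y_1)=0$, against the constraint $r-yr_y>0$ — to show that $m$ cannot reach $0$ before $y=0$. The finiteness of $y_1$ and $y_2$ and the limit-exchange in \eqref{r-eqn} are the other points that require care, but the positivity of $y_1$ is the crux.
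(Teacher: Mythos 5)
Most of your proposal is correct, and it follows a genuinely different route from the paper's. The paper does not re-derive \eqref{first-derivative-ineqn} and \eqref{second-derivative-ineqn} inside the proof of Theorem \ref{asymptotic-property-thm}: it simply reads them off from the construction in the proof of Theorem \ref{existence-thm} (where $y_1$ and $y_2$ are produced en route, via Lemmas \ref{local-maximal-existence-lem}--\ref{y-limit-positive-bd} and Claims 1--2), then proves \eqref{r'-limit-as-y-infty} by the mean value theorem plus convexity, and \eqref{r-limit-as-y-infty} by passing to the limit in \eqref{r-eqn}, exactly as you do. Your self-contained scheme --- the sign function $\Psi$ with $\operatorname{sign} r_{yy}=\operatorname{sign}\Psi$, the identity $\Psi'=-r_y(1+r_y^2)$ at zeros of $\Psi$, the integral-divergence argument (using $(r/y)_y<0$ and $r-yr_y\le r$ for $y>0$) forcing $r_y$ to vanish, the once-only sign change of $\Psi$ giving $y_2$, and the monotone quantity $G=r-yr_y$ for the asymptotics --- is correct given the global solution from Theorem \ref{existence-thm}, and is in places cleaner than the paper's case analysis.

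However, there is a genuine gap, which you flagged yourself: you never prove $y_1>0$, and this is part of the statement ($y_2>y_1>0$), not a cosmetic refinement. Your fallback suggestion (tracking $m=rr_y$) is not developed, so as written the proposal does not prove the theorem. The paper closes exactly this point in Lemma \ref{y-limit-positive-bd}, and it does so in the inverse picture: the critical point of $r$ corresponds to the finite radius $r_1$ at which $y_r\to+\infty$ (Lemma \ref{local-maximal-existence-lem}), and one shows $y_1=\lim_{r\to r_1}y(r)>0$ as follows. If $y_1\le 0$, then $y(r)<0$ on $(0,r_1)$, hence $ry_r-y\ge ry_r$, and \eqref{y-eqn} yields
\begin{equation*}
\frac{y_{rr}}{1+y_r^2}\le-\frac{n-1}{r}\,y_r+\frac{1+y_r^2}{\lambda r y_r}=\frac{\lambda^{-1}-(n-1)}{r}\,y_r+\frac{1}{\lambda r y_r}.
\end{equation*}
Near $r_1$ one has $y_r\ge 1$ and $r\ge r_1/2$, so integrating in $r$ and using that $\int y_r\,dr=y(r)-y(R_3)\le|\mu|$ (the total rise of $y$ cannot exceed $|\mu|$ while $y\le 0$) bounds $\tan^{-1}(y_r)$ by a finite constant depending on $|\mu|$, $\lambda$, $r_1$, contradicting $y_r\to+\infty$. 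Two remarks: first, this barrier argument runs most naturally in the $y(r)$ variable, where the quantity you would have to integrate is the bounded increment of $y$ rather than the unbounded increment of $r_y$; second, contrary to your guess, it uses only $\lambda<\frac{1}{n-1}$ and the finiteness of $\mu$, not the lower bound $\lambda>\frac1n$. Supplying this lemma (or citing the corresponding step of the paper's Theorem \ref{existence-thm}) would complete your proof.
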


\begin{rmk}
G.~Drugan, H.~Lee and G.~Wheeler stated the existence result Theorem \ref{existence-thm} and 
part of Theorem \ref{asymptotic-property-thm} in  \cite{DLW}. However there are no detailed proofs of these results in \cite{DLW}. On the other hand the existence result Theorem \ref{existence-thm} is proved by D.~Kim and J.~Pyo \cite{KP2} using phase plane method. In this paper we will give a different and elementary proof of these results. Note that since $y(r)$ is the inverse of $r(y)$ the result of \cite{H1} implies that Theorem \ref{existence-thm} also holds when $\lambda>\frac{1}{n-1}$.
\end{rmk}

\section{Proof of theorems}
\setcounter{equation}{0}
\setcounter{thm}{0} 

In this section we will prove Theorem \ref{existence-thm} and Theorem \ref{asymptotic-property-thm}. We first recall the following results of \cite{H1}.

\begin{lem}\label{local-existence-lem}(Lemma 2.1 of \cite{H1})
For any $n\ge 2$, $\lambda>0$ and $\mu<0$, there exists a constant $R_0>0$ such that the equation 
\begin{equation}\label{y-ivp-problem1}
\left\{\begin{aligned}
&y_{rr}+\frac{n-1}{r}\cdot(1+y_r^2)y_r-\frac{(1+y_r^2)^2}{\lambda(ry_r-y)}=0\quad\mbox{ in }(0,R_0)\\
&y(0)=\mu,\quad y_r(0)=0\end{aligned}\right.
\end{equation}
has a unique solution $y(r)\in C^1([0,R_0))\cap C^2(0,R_0)$ which satisfies
\begin{equation}\label{y-y-derivative-ineqn}
ry_r(r)-y(r)>0\quad\mbox{ in }[0,R_0).
\end{equation}
\end{lem}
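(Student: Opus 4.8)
The plan is to recast the singular second-order initial value problem \eqref{y-ivp-problem1} as a coupled system of integral equations and to solve it by the Banach fixed point theorem on a short interval $[0,R_0]$. Writing $p=y_r$, the equation is equivalent to the first-order system $y'=p$, $p'+\frac{n-1}{r}p=G(r,y,p)$, where
\[
G(r,y,p)=\frac{(1+p^2)^2}{\lambda(rp-y)}-\frac{n-1}{r}p^3.
\]
The only genuine singularity is the linear term $\frac{n-1}{r}p$; the cubic term is harmless once we know $p(r)=O(r)$, and the factor $\lambda(rp-y)$ is under control because at $r=0$ it equals $-\lambda\mu>0$. Multiplying the $p$-equation by the integrating factor $r^{n-1}$ and integrating from $0$ (the boundedness of $p$ automatically kills the singular homogeneous solution $r^{-(n-1)}$ and thereby encodes the condition $y_r(0)=0$) gives the equivalent fixed point formulation
\[
p(r)=\frac{1}{r^{n-1}}\int_0^r s^{n-1}G(s,y(s),p(s))\,ds,\qquad y(r)=\mu+\int_0^r p(s)\,ds.
\]

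Next I would fix constants $0<\delta<|\mu|/2$ and $M>0$ and work in the complete metric space
\[
X=\{(y,p)\in C([0,R_0])^2:\ |y(r)-\mu|\le\delta,\ |p(r)|\le M\ \ \forall r\in[0,R_0]\}
\]
equipped with the sup norm. On $X$ one has $rp-y\ge -\mu-\delta-R_0M\ge |\mu|/4>0$ once $R_0$ is small, so $G$ is bounded by a constant $K=K(n,\lambda,\mu,\delta,M)$ and is Lipschitz in $(y,p)$ there with some constant $L$. The averaging operator obeys the crucial estimate
\[
\Big|\frac{1}{r^{n-1}}\int_0^r s^{n-1}g(s)\,ds\Big|\le\frac{r}{n}\sup|g|\le\frac{R_0}{n}\sup|g|,
\]
and the same bound controls differences. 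Hence the map $\mathcal{T}(y,p)=(\widetilde y,\widetilde p)$ defined by the right-hand sides above sends $X$ into itself once $R_0K/n\le M$ and $R_0M\le\delta$, and becomes a contraction after shrinking $R_0$ so that $R_0L/n<1$ and $R_0<1$. The unique fixed point is the unique solution of the integral system, and hence of \eqref{y-ivp-problem1}.

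Finally I would read off the regularity and the sign condition. The integral formula for $p$ forces $p(r)\to0$ as $r\to0$, so $y\in C^1([0,R_0))$ with $y_r(0)=0$; differentiating the integral equations and invoking the original ODE shows $y\in C^2(0,R_0)$, since the right-hand side is continuous wherever $rp-y\ne0$. The inequality \eqref{y-y-derivative-ineqn} holds at $r=0$ because $(ry_r-y)|_{r=0}=-\mu>0$, and it persists on $[0,R_0)$ after a further shrinking of $R_0$, by continuity. I expect the main obstacle to be the treatment of the coordinate singularity at $r=0$: one must simultaneously extract the regular-singular behaviour $p(r)\sim r/(n\lambda|\mu|)$ and keep the denominator $\lambda(rp-y)$ bounded away from zero. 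It is precisely the integrating factor $r^{n-1}$, together with the restriction to small $R_0$ so that $y$ stays near $\mu$, that makes both possible.
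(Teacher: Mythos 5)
First, note that the paper itself does not prove this lemma: it is recalled verbatim from Lemma~2.1 of \cite{H1}, so there is no in-paper proof to compare against, and your attempt must be judged on its own. Your overall strategy (integrating factor $r^{n-1}$, integral reformulation, Banach fixed point on a short interval) is the natural and standard one for this kind of regular--singular initial value problem. However, as written the argument has a genuine flaw at its central step. You claim that on $X$ the function $G$ is ``bounded by a constant $K$'' and Lipschitz with constant $L$; this is false, because $X$ only encodes $|p|\le M$, so the term $\frac{n-1}{r}p^3$ in $G$ can be of size $\frac{n-1}{r}M^3$, which blows up as $r\to 0$ (take $p\equiv M$, $y\equiv\mu$, which lies in $X$). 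Consequently your averaging estimate $\bigl|\frac{1}{r^{n-1}}\int_0^r s^{n-1}g\,ds\bigr|\le\frac{r}{n}\sup|g|$ does not apply to $g=G$, and the conditions ``$R_0K/n\le M$ and $R_0L/n<1$'' are vacuous. The failure is not cosmetic: since $\frac{1}{r^{n-1}}\int_0^r (n-1)s^{n-2}\bigl(p_1^3-p_2^3\bigr)\,ds=p_1^3-p_2^3\approx 3M^2(p_1-p_2)$ for constants $p_1,p_2$ near $M$, the cubic term contributes roughly $3M^2$ to the Lipschitz constant of $\mathcal{T}$ \emph{uniformly in $R_0$}, so for $M\ge 1/\sqrt{3}$ the map is not a contraction on $X$ no matter how small $R_0$ is. You yourself observe that the cubic term is ``harmless once we know $p(r)=O(r)$,'' but your space $X$ does not encode $p=O(r)$, so at this point the argument is circular.

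The gap is fixable within your framework in either of two ways. (i) Fix $M$ small in advance (say $M=\tfrac12$): after averaging, the cubic term contributes at most $\sup|p|^3\le M^3\le M/4$ to $|\widetilde p\,|$ and at most $3M^2\le 3/4$ to the contraction constant, while the genuinely bounded part $\frac{(1+p^2)^2}{\lambda(rp-y)}$ contributes $O(R_0)$; choosing $R_0$ small then gives both invariance of $X$ and a contraction constant $\tfrac34+O(R_0)<1$. This is consistent with the actual behaviour $p(r)\approx \frac{r}{n\lambda|\mu|}$, which is small on $[0,R_0]$. (ii) Alternatively, work in the weighted space $\{(y,p):\,|y-\mu|\le\delta,\ |p(r)|\le Mr\}$ with norm $\sup_r |p(r)|/r$; then $\frac{n-1}{r}|p|^3\le (n-1)M^3r^2$ really is bounded, your estimates go through essentially verbatim, and $y_r(0)=0$ is automatic. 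With either repair, your remaining steps are sound: $C^2(0,R_0)$ regularity follows by differentiating the integral equation away from $r=0$; the inequality \eqref{y-y-derivative-ineqn} in fact holds on all of $[0,R_0)$ with no further shrinking, since membership in $X$ already forces $rp-y\ge|\mu|/4$; and for uniqueness you should add the (standard) chaining remark that any solution $y\in C^1([0,R_0))\cap C^2(0,R_0)$ of \eqref{y-ivp-problem1} has $(y,y_r)$ in $X$ near $r=0$ and satisfies the integral equation there (integrate from $\epsilon$ and let $\epsilon\to0$, using boundedness of $y_r$ and $n\ge2$), so it coincides with the fixed point near $0$, and then with it on all of $[0,R_0)$ by ordinary ODE uniqueness, the equation being regular wherever $r>0$ and $ry_r-y>0$.
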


\begin{lem}\label{local-y-property-lem}(cf. Lemma 2.3 and Lemma 2.4 of \cite{H1})
For any $n\ge 2$, $\lambda>0$ and $\mu<0$. Let $R_0>0$ and  $y(r)\in C^1([0,R_0))\cap C^2(0,R_0)$ be a solution of \eqref{y-ivp-problem1}  which satisfies \eqref{y-y-derivative-ineqn}.
Then
\begin{equation}\label{y-2nd-derivative-at-origin-positive}
\lim_{r\to 0}y_{rr}(r)=\frac{1}{n\lambda|\mu|}
\end{equation}
and
\begin{equation}\label{y-1st-2nd-derivatives-positive}
y_{rr}(r)>0,\quad y_r(r)>0\quad\forall 0<r<R_0
\end{equation}
and there exists a constant $\delta_1=\delta_1(R_0)$ such that
\begin{equation}\label{y-yr-lower-bd}
ry_r(r)-y(r)\ge\delta_1\quad\forall 0\le r<R_0.
\end{equation}
\end{lem}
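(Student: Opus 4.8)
The plan is to establish the three assertions in the forced logical order: the limit \eqref{y-2nd-derivative-at-origin-positive} yields positivity of $y_{rr}$ near $r=0$, which drives the global positivity \eqref{y-1st-2nd-derivatives-positive}, which in turn gives the lower bound \eqref{y-yr-lower-bd}. Throughout I would write $v=y_r$ and $A(r)=ry_r(r)-y(r)$, so that $A(0)=-\mu=|\mu|>0$ and, since $A'=y_r+ry_{rr}-y_r$, one has the convenient identity $A'(r)=ry_{rr}(r)$.

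For the limit at the origin I would first solve \eqref{y-ivp-problem1} for $y_{rr}$,
\[
y_{rr}=-\frac{n-1}{r}(1+v^2)v+g(r),\qquad g(r):=\frac{(1+v^2)^2}{\lambda A(r)},
\]
noting that $g$ extends continuously to $r=0$ with $g(0)=\frac{1}{\lambda|\mu|}$ because $v(0)=0$ and $A(0)=|\mu|$. Multiplying $v'+\frac{n-1}{r}(1+v^2)v=g$ by $r^{n-1}$ and writing $(1+v^2)v=v+v^3$ converts it to $(r^{n-1}v)'=r^{n-1}g-(n-1)r^{n-2}v^3$; since $r^{n-1}v\to0$ as $r\searrow0$ (here $n\ge2$ and $v$ is bounded), integrating from $0$ gives
\[
v(r)=\frac{1}{r^{n-1}}\int_0^r s^{n-1}g(s)\,ds-\frac{n-1}{r^{n-1}}\int_0^r s^{n-2}v(s)^3\,ds.
\]
The main obstacle is to control the singular term, i.e. to prove $v(r)=O(r)$. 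I would do this by a bootstrap on $m(r):=\sup_{[0,r]}|v|$: the first integral above is at most $\tfrac1n(\sup_{[0,r]}|g|)\,r=:C_1r$ and the second is at most $m(r)^3$, so $m(r)\le C_1r+m(r)^3$; since $m(r)\to0$, for small $r$ this closes to $m(r)\le 2C_1r$, so $v/r$ is bounded. With $v=O(r)$ in hand, L'H\^opital's rule gives $\frac{1}{r^n}\int_0^r s^{n-1}g\,ds\to\frac{g(0)}{n}$ while the cubic term is $O(r^2)$, whence $v(r)/r\to\frac{1}{n\lambda|\mu|}$; feeding $\frac{v}{r}\to\frac{1}{n\lambda|\mu|}$ and $v\to0$ back into the displayed formula for $y_{rr}$ yields $\lim_{r\to0}y_{rr}=-(n-1)\frac{1}{n\lambda|\mu|}+\frac{1}{\lambda|\mu|}=\frac{1}{n\lambda|\mu|}$, which is \eqref{y-2nd-derivative-at-origin-positive}.

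For \eqref{y-1st-2nd-derivatives-positive} I would use a continuation (maximum-principle) argument. Differentiating the expression for $y_{rr}$ and using $A'=ry_{rr}$, one checks that every term of $y_{rrr}$ except the one produced by the singular factor $\frac{n-1}{r}$ carries a factor of $y_{rr}$; hence at any point $r_0\in(0,R_0)$ with $y_{rr}(r_0)=0$,
\[
y_{rrr}(r_0)=\frac{(n-1)\,(1+v(r_0)^2)\,v(r_0)}{r_0^{\,2}}.
\]
By \eqref{y-2nd-derivative-at-origin-positive} we have $y_{rr}>0$ on some $(0,\delta)$, so $v>0$ there. Let $r_0$ be the first zero of $y_{rr}$; then $y_{rr}>0$ on $(0,r_0)$ together with $v(0)=0$ forces $v(r_0)>0$, so $y_{rrr}(r_0)>0$. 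But $y_{rr}$ decreasing to $0$ at $r_0$ requires $y_{rrr}(r_0)\le0$, a contradiction; hence $y_{rr}>0$ on all of $(0,R_0)$, and integrating from $v(0)=0$ gives $v=y_r>0$ as well. Finally, \eqref{y-yr-lower-bd} follows at once: since $A'=ry_{rr}>0$ on $(0,R_0)$, the function $A$ is non-decreasing with $A(0)=|\mu|$, so $ry_r(r)-y(r)\ge|\mu|$ on $[0,R_0)$ and one may take $\delta_1=|\mu|$. Thus the only genuinely delicate point is the analysis of the indeterminate form at the singular endpoint $r=0$ in the first step; everything afterwards is a sign analysis and a monotonicity statement.
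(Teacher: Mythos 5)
Your proof is correct, but it cannot be compared line-by-line with ``the paper's proof'' for a simple reason: this paper contains no proof of the lemma at all --- the statement is imported by citation from Lemma 2.3 and Lemma 2.4 of \cite{H1}. Judged on its own, your argument is complete and self-contained. The identity $(r^{n-1}v)'=r^{n-1}g-(n-1)r^{n-2}v^3$ with $g=(1+v^2)^2/(\lambda A)$ continuous up to $r=0$ (since \eqref{y-y-derivative-ineqn} and $y(0)=\mu$ give $A(0)=|\mu|>0$), the bootstrap $m(r)\le C_1r+m(r)^3$ which closes because $m(r)\to 0$, and the resulting limit $v(r)/r\to 1/(n\lambda|\mu|)$ fed back into the equation do yield \eqref{y-2nd-derivative-at-origin-positive}; note also that $y_{rr}\in C^1(0,R_0)$ by the equation itself, so your third-derivative computation is legitimate. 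Your first-zero argument for \eqref{y-1st-2nd-derivatives-positive} --- at a first zero $r_0$ of $y_{rr}$ every term of $y_{rrr}$ except $\frac{n-1}{r_0^2}(1+v^2)v$ carries a factor of $y_{rr}$, so $y_{rrr}(r_0)>0$, contradicting the sign $y_{rrr}(r_0)\le 0$ forced by $y_{rr}>0$ on $(0,r_0)$ --- is precisely the device this paper uses later in \eqref{r-3rd-derivative-eqn} at zeros of $r_{yy}$, and your identity $A'(r)=ry_{rr}(r)$ is the mirror image of $(r(y)-yr_y(y))_y=-yr_{yy}(y)$ in \eqref{r-r-derivative-expression-lower-bd5}, so your proof is stylistically consistent with the rest of the paper even though the paper delegates this particular lemma to \cite{H1}. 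Two small bonuses of your route: you obtain the explicit constant $\delta_1=|\mu|$, independent of $R_0$ (the lemma only asserts existence of some $\delta_1(R_0)$), and your presentation makes clear that \eqref{y-yr-lower-bd} is an immediate corollary of \eqref{y-1st-2nd-derivatives-positive} rather than an independent fact. The only cosmetic point worth fixing is that your constant $C_1=\frac1n\sup_{[0,r]}|g|$ should be frozen on a fixed compact interval $[0,r_*]\subset[0,R_0)$ so that it does not depend on the running variable $r$; this is harmless since $g$ is continuous on $[0,R_0)$.
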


\begin{lem}\label{local-maximal-existence-lem}
For any $n\ge 2$, $\frac{1}{n}<\lambda<\frac{1}{n-1}$ and $\mu<0$, there exist a constant $r_1\in\Rr^+$ and a unique solution $y(r)\in C^1([0,r_1))\cap C^2(0,r_1)$ of the equation 
\begin{equation}\label{y-ivp-problem2}
\left\{\begin{aligned}
&y_{rr}+\frac{n-1}{r}\cdot(1+y_r^2)y_r-\frac{(1+y_r^2)^2}{\lambda(ry_r-y)}=0\quad\mbox{ in }(0,r_1)\\
&y(0)=\mu,\quad y_r(0)=0
\end{aligned}\right.
\end{equation}
which satisfies \eqref{y-y-derivative-ineqn}, \eqref{y-2nd-derivative-at-origin-positive}
and \eqref{y-1st-2nd-derivatives-positive} with $R_0=r_1$.
Moreover 
\begin{equation}\label{y-derivative-limit1}
\lim_{r\to r_1}y_r(r)=+\infty.
\end{equation}
\end{lem}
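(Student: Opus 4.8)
The plan is to continue the local solution furnished by Lemma \ref{local-existence-lem} and Lemma \ref{local-y-property-lem} to its maximal interval and then identify the right endpoint. First I would set
\[
r_1=\sup\{R\ge R_0:\ \eqref{y-ivp-problem2}\text{ has a }C^2\text{ solution on }[0,R)\text{ satisfying }\eqref{y-y-derivative-ineqn}\},
\]
which is well defined and $\ge R_0>0$ by the two quoted lemmas; uniqueness on $[0,r_1)$ is inherited from the local uniqueness in Lemma \ref{local-existence-lem} and the standard ODE uniqueness theorem, since the right-hand side of the equation is smooth wherever $ry_r-y>0$, which also gives $y\in C^\infty(0,r_1)$ by bootstrapping. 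I would then show the qualitative signs persist on all of $(0,r_1)$. Since $\frac{d}{dr}(ry_r-y)=ry_{rr}$, once $y_{rr}>0$ is known the quantity $ry_r-y$ is strictly increasing, and because $ry_r-y\to|\mu|$ as $r\to 0^+$ this yields the clean bound $ry_r-y>|\mu|>0$ throughout, so the denominator never degenerates. To keep $y_{rr}>0$ I would argue by contradiction: it is positive near $0$ by \eqref{y-2nd-derivative-at-origin-positive}, so if $r_*$ were its first zero, then differentiating the equation and noting that every term in that derivative carries a factor of $y_{rr}$ or of $\frac{d}{dr}(ry_r-y)=ry_{rr}$ except the one produced by the explicit $1/r$, I would obtain $y_{rrr}(r_*)=\frac{n-1}{r_*^2}(1+y_r(r_*)^2)y_r(r_*)>0$, which is incompatible with $y_{rr}$ decreasing to $0$ at $r_*$. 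Hence $y_{rr}>0$ on $(0,r_1)$, and consequently $y_r>0$ there (it increases from $y_r(0)=0$).

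The heart of the proof is to show $r_1<\infty$ and $\lim_{r\to r_1}y_r=+\infty$. Because $ry_r-y$ is bounded below by $|\mu|>0$, the only way continuation can fail at $r_1$ is that the increasing function $y_r$ becomes unbounded; thus it suffices to prove $r_1<\infty$, and then $y_r\to+\infty$ follows automatically. To rule out $r_1=\infty$ I would invoke $\lambda<\frac1{n-1}$, i.e. $\alpha:=1-(n-1)\lambda>0$, which is exactly what makes the forcing term dominate. Writing $y_{rr}=\frac{(1+y_r^2)^2}{\lambda(ry_r-y)}-\frac{(n-1)(1+y_r^2)y_r}{r}$, using $ry_r-y\le ry_r+|\mu|$, and restricting to $r\ge r_0:=R_0/2$, a short estimate gives a lower bound of the form $y_{rr}\ge \frac{\alpha}{2\lambda}\,\frac{y_r^3}{r}$ valid once $y_r$ exceeds an explicit threshold $p_0$. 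Integrating $dy_r/y_r^3\ge \frac{\alpha}{2\lambda}\,dr/r$ from a point where $y_r=p_0$ shows $y_r$ blows up after a finite increase of $\log r$, so the existence interval must be finite whenever $y_r$ ever becomes large.

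The step I expect to be the main obstacle is excluding the remaining possibility that $y_r$ stays bounded on an infinite interval, say $y_r\uparrow L<\infty$ as $r\to\infty$. Here I would again use $h:=ry_r-y$. Since $y_r\ge y_r(1)>0$ for $r\ge1$ we have $y\to+\infty$, so $h=ry_r-y<Lr$; on the other hand, from $h'=ry_{rr}=\frac{r(1+y_r^2)^2}{\lambda h}-(n-1)(1+y_r^2)y_r$ together with $h<Lr$ I would obtain, in the limit $r\to\infty$,
\[
\liminf_{r\to\infty}h'\ \ge\ (1+L^2)\,\frac{1+\alpha L^2}{\lambda L},
\]
and the elementary inequality $(1+L^2)(1+\alpha L^2)>L^2>\lambda L^2$ (using $\lambda<1$, which holds since $\lambda<\frac1{n-1}\le1$) shows this lower bound strictly exceeds $L$. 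Thus $h$ eventually grows faster than $Lr$, contradicting $h<Lr$, and the bounded-slope scenario is impossible. Combining the two cases gives $r_1<\infty$ and hence $\lim_{r\to r_1}y_r=+\infty$. The delicate points are pinning down the constant in the cubic lower bound and verifying the strict inequality driving the final contradiction in the bounded-slope case.
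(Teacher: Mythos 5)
Your proposal is correct, but the core of it runs along a genuinely different line from the paper's proof, so a comparison is worth recording. The overall skeleton is the same (maximal continuation of the local solution from Lemma \ref{local-existence-lem}, persistence of the sign conditions, finiteness of $r_1$ by contradiction, then blow-up of $y_r$ via a continuation argument), but the two key ingredients differ. First, where the paper simply quotes Lemma \ref{local-y-property-lem} (i.e.\ Lemmas 2.3--2.4 of \cite{H1}) for \eqref{y-1st-2nd-derivatives-positive} and the lower bound \eqref{y-yr-lower-bd}, you re-derive the convexity by the first-zero/third-derivative argument and then get the cleaner, self-contained bound $ry_r-y>|\mu|$ from $(ry_r-y)_r=ry_{rr}>0$; this also lets you replace the paper's appeal to Lemma 2.2 and Theorem 1.1 of \cite{H1} in the extension step by elementary ODE continuation. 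Second, and more substantially, the paper rules out $r_1=+\infty$ with a single unified estimate: using $y>0$ eventually and $\lambda^{-1}>n-1$ it derives \eqref{y-rr-ineqn1}, bootstraps to $y_r\gtrsim r^{n-1}$ and $y\gtrsim r^n$ in \eqref{y-rr-ineqn2}, and then integrates \eqref{y-rr-ineqn3} to force $\tan^{-1}(y_r)\to+\infty$, contradicting $\tan^{-1}(y_r)<\pi/2$; no case distinction on the boundedness of $y_r$ is needed. You instead split into two cases: when $y_r$ gets large, the Riccati-type inequality $y_{rr}\ge c\,y_r^3/r$ (with $c$ proportional to $\alpha=1-(n-1)\lambda>0$) forces blow-up after a bounded increase of $\log r$, and when $y_r\uparrow L<\infty$ on an infinite interval, the comparison of $h=ry_r-y<Lr$ with $\liminf h'\ge(1+L^2)(1+\alpha L^2)/(\lambda L)>L$ (valid since $\lambda<1$) gives a contradiction. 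I checked both cases and the constants work out (any positive constant in place of $\alpha/2\lambda$ suffices, and the strict inequality $(1+L^2)(1+\alpha L^2)>\lambda L^2$ is immediate), so your argument is complete. What the paper's route buys is brevity and uniformity, at the cost of leaning on quoted results from \cite{H1}; what your route buys is self-containedness, an explicit quantitative blow-up mechanism for $y_r$, and the sharp denominator bound $ry_r-y>|\mu|$, at the cost of a two-case analysis.
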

\begin{proof}
We first observe that uniqueness of solution of \eqref{y-ivp-problem2} follows from Lemma \ref{local-existence-lem} and standard ODE theory. 
By Lemma \ref{local-existence-lem}  there exist a constant $R_0>0$ and a unique solution $y(r)\in C^1([0,R_0))\cap C^2(0,R_0)$ of the equation \eqref{y-ivp-problem1} which satisfies \eqref{y-y-derivative-ineqn}. Let $(0,r_1)$ be the maximal interval of 
existence of solution $y(r)\in C^1([0,r_1))\cap C^2(0,r_1)$ of the equation \eqref{y-eqn} which satisfies 
\begin{equation}\label{initial-y-values}
y(0)=\mu,\quad y_r(0)=0,
\end{equation}
such that \eqref{y-y-derivative-ineqn} holds for any $0<r<r_1$. By Lemma \ref{local-y-property-lem}, \eqref{y-2nd-derivative-at-origin-positive} and \eqref{y-1st-2nd-derivatives-positive}  hold with $R_0$ being replaced by $r_1$.

We claim that $r_1<\infty$. Suppose not. Then $r_1=+\infty$. Since \eqref{y-1st-2nd-derivatives-positive} now holds for any $r>0$, there exists a constant $R_1>1$
such that 
\begin{equation}\label{y-positive}
y_r(r)\ge y_r(1)>0\quad\forall r>1\quad\mbox{ and }\quad y(r)>0\quad\forall r\ge R_1.
\end{equation}
Since $\frac{1}{\lambda}>(n-1)$, by \eqref{y-ivp-problem2} and \eqref{y-positive},

\begin{equation}\label{y-rr-ineqn1}
\frac{y_{rr}}{1+y_r^2}=\frac{(n-1)yy_r+\frac{r}{\lambda}+\left(\frac{1}{\lambda}-(n-1)\right)ry_r^2}{r(ry_r-y)}
\ge\frac{n-1}{r}\cdot\frac{yy_r+r}{ry_r}
=\frac{n-1}{r}\left(\frac{y}{r}+\frac{1}{y_r}\right)\quad\forall r>R_1.
\end{equation}
Hence by  \eqref{y-positive} and \eqref{y-rr-ineqn1},
\begin{align}\label{y-rr-ineqn2}
&y_{rr}\ge\frac{n-1}{r}(1+y_r^2)\left(\frac{y}{r}+\frac{1}{y_r}\right)\ge\frac{n-1}{r}y_r\quad\forall r>R_1\notag\\
\Rightarrow\quad&(r^{1-n}y_r)_r\ge 0\quad\qquad\qquad\forall r>R_1\notag\\
\Rightarrow\quad&y_r(r)\ge R_1^{1-n}y_r(R_1)r^{n-1}\quad\forall r>R_1\notag\\
\Rightarrow\quad&y(r)\ge c_1r^n-c_2\qquad\qquad\forall r>R_1
\end{align}
where $c_1=R_1^{1-n}y_r(R_1)/n$ and $c_2=R_1y_r(R_1)/n$.
By \eqref{y-positive},  \eqref{y-rr-ineqn1} and \eqref{y-rr-ineqn2}, 
\begin{align}\label{y-rr-ineqn3}
&\frac{y_{rr}}{1+y_r^2}\ge\frac{n-1}{r^2}y(r)\ge (n-1)\left(c_1r^{n-2}-\frac{c_2}{r^2}\right)\quad\forall r>R_1\notag\\
\Rightarrow\quad&\frac{\pi}{2}>\tan^{-1}(y_r(r))\ge c_1r^{n-1}+\frac{(n-1)c_2}{r}-c_3\quad\forall r>R_1
\end{align}
where $c_3=c_1R_1^{n-1}+\frac{(n-1)c_2}{R_1}$. Since the right hand side of \eqref{y-rr-ineqn3} goes to infinity as $r\to\infty$, contradiction arises. Hence $r_1<+\infty$ and the claim follows.

By \eqref{y-1st-2nd-derivatives-positive} $y_r(r)$ is a monotone increasing function of $r\in (0,r_1)$. Hence
$a_0:=\underset{\substack{r\to r_1}}{\lim}y_r(r)$ exists and $a_0\in (0,+\infty]$. If $a_0<+\infty$, then 
\begin{equation}\label{yr-uniform-upper-bd}
0<y_r(r)\le a_0<+\infty\quad\forall 0<r<r_1.
\end{equation}
By Lemma \ref{local-y-property-lem}, there exists a constant $\delta_1=\delta_1(r_1)$ such that
\eqref{y-yr-lower-bd}  holds with $R_0$ being replaced by $r_1$. Then by \eqref{y-yr-lower-bd}, 
\eqref{yr-uniform-upper-bd}, Lemma 2.2 of \cite{H1} and the same argument as the proof of Theorem 1.1 of \cite{H1} we can extend $y(r)$ to a solution $y(r)\in C^1([0,r_1+\delta_2))\cap C^2(0,r_1+\delta_2)$ of \eqref{y-eqn} in $(0,r_1+\delta_2)$ for some constant $\delta_2>0$ that satisfy \eqref{y-y-derivative-ineqn} and
\eqref{initial-y-values}  in $[0,r_1+\delta_2)$. This contradicts the choice of $r_1$. Hence $a_0=+\infty$ and \eqref{y-derivative-limit1} follows.
\end{proof}

\begin{lem}\label{y-limit-positive-bd}
For any $n\ge 2$, $\frac{1}{n}<\lambda<\frac{1}{n-1}$ and $\mu<0$. Let $r_1\in\Rr^+$ and  $y(r)\in C^1([0,r_1))\cap C^2(0,r_1)$ be as in Lemma \ref{local-maximal-existence-lem}. Then
\begin{equation}\label{y1-defn}
y_1:=\lim_{r\to r_1}y(r)\quad\mbox{ exists and }\quad 0<y_1<+\infty.
\end{equation}
\end{lem}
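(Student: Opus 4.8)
The limit $y_1=\lim_{r\to r_1}y(r)$ exists in $(\mu,+\infty]$ for free, since Lemma~\ref{local-y-property-lem} gives $y_r>0$ on $(0,r_1)$, so $y$ is increasing; the substance of the statement is the two--sided bound $0<y_1<+\infty$. Throughout I plan to pass freely to the inverse function $r(y)$, which is legitimate because $y_r>0$ makes $y$ a $C^2$ diffeomorphism of $(0,r_1)$ onto $(\mu,y_1)$. Its inverse $r(y)$ is increasing, satisfies \eqref{r-eqn}, and is concave since $r_{yy}=-y_{rr}y_r^{-3}<0$ by \eqref{y-1st-2nd-derivatives-positive}. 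At the two ends one has $r(\mu^+)=0$, $r_y(\mu^+)=+\infty$, and $r\to r_1$, $r_y\to 0$ as $y\to y_1^-$ (the latter by \eqref{y-derivative-limit1}).

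For the upper bound $y_1<+\infty$ I would argue by contradiction. If $y_1=+\infty$, then $r$ is increasing, concave and bounded above by $r_1$ on $(\mu,+\infty)$, so $r\to r_1$ and $r_y\to 0$ as $y\to+\infty$. Consider $\phi:=r-yr_y$, which is positive since $r-yr_y=(ry_r-y)/y_r$ and $ry_r-y>0$ by \eqref{y-y-derivative-ineqn}. A short computation gives $\phi'=-yr_{yy}$, so $\phi$ is eventually increasing (once $y>0$, using $r_{yy}<0$) and bounded above by $r<r_1$; hence $\phi\to G$ for some $G\in(0,r_1]$, and consequently $yr_y=r-\phi\to r_1-G$. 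Passing to the limit in \eqref{r-eqn} then shows $r_{yy}\to\frac{n-1}{r_1}-\frac{1}{\lambda G}$. Because $r_y$ is positive and tends to $0$, this limit is forced to vanish, which gives $G=\frac{r_1}{\lambda(n-1)}$; but $\lambda(n-1)<1$ makes $G>r_1$, contradicting $G\le r_1$. This is the step where the hypothesis $\lambda<\frac{1}{n-1}$ is indispensable, and I expect controlling these limits (in particular justifying that $r_{yy}$ genuinely converges) to be the main technical obstacle.

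For the lower bound $y_1>0$ the key device is the auxiliary function $\widetilde{g}(r):=r+y(r)y_r(r)=\tfrac12(r^2+y^2)'$, i.e. half the radial derivative of $|X|^2$. Using $y(0)=\mu$, $y_r(0)=0$ and \eqref{y-2nd-derivative-at-origin-positive} one finds $\widetilde{g}(0)=0$ and $\widetilde{g}'(0)=1-\frac{1}{n\lambda}>0$, so $\widetilde{g}>0$ for small $r>0$; here the hypothesis $\lambda>\frac1n$ enters. The crucial point is that $\widetilde{g}$ cannot return to $0$: at any hypothetical zero $r_*\in(0,r_1)$ one has $yy_r=-r_*$, whence $ry_r-y=\frac{r(1+y_r^2)}{y_r}$, and substituting this into \eqref{y-eqn} collapses the expression for $\widetilde{g}'(r_*)$ to $(1+y_r^2)(n-\frac{1}{\lambda})>0$ — again using $\lambda>\frac1n$. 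A first down--crossing is therefore impossible, so $\widetilde{g}>0$ on all of $(0,r_1)$.

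Finally I convert $\widetilde{g}>0$ into the claim. Dividing by $y_r>0$ gives $y(r)>-r/y_r(r)$, and letting $r\to r_1$ (where $y_r\to+\infty$) yields $y_1\ge 0$. To upgrade this to strict positivity I use that $y_1<+\infty$ is already known, so $r(y)$ extends as a $C^2$ function up to $y=y_1$ with $r_y(y_1)=0$; the same quantity, written as $g(y)=rr_y+y$, is then $C^1$ up to the endpoint with $g'(y_1^-)=n-\frac{1}{\lambda}>0$ (evaluate the analogue of the previous computation at $r_y=0$, $r=r_1$). Were $y_1=0$, then $g>0$ on $(\mu,0)$ while $g(0)=0$, contradicting $g'(0^-)>0$, which forces $g<0$ just to the left of $0$. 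Hence $y_1>0$, completing the proof.
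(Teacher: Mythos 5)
Your proof is correct, but it takes a genuinely different route from the paper's at both halves of the statement. For $y_1<+\infty$ the paper does not argue by contradiction: using $y>0$ and $r_y>0$ to get $0<r-yr_y\le r\le r_1$, equation \eqref{r-eqn} gives $\frac{r_{yy}}{1+r_y^2}\le\frac{n-1}{r}-\frac{1}{\lambda r}\le-\frac{c_4}{r_1}$ with $c_4=\lambda^{-1}-(n-1)>0$, and a single integration yields the explicit bound $y_1\le\frac{r_1}{c_4}\tan^{-1}(r_y(0))$; note this is run on $(0,y_1)$, so the paper must establish $y_1>0$ first, whereas your order is reversed, which is consistent because your finiteness proof never uses the lower bound (and your strict positivity step does use finiteness). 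Your soft version --- if $y_1=+\infty$ then $\phi=r-yr_y$ increases to some $G\in(0,r_1]$, the right side of \eqref{r-eqn} converges, hence $r_{yy}\to0$, forcing $G=\frac{r_1}{\lambda(n-1)}>r_1$ --- is equally valid, and the convergence of $r_{yy}$ that you flagged as the main technical obstacle is automatic, exactly for the reason you give: every term on the right of \eqref{r-eqn} converges. For $y_1>0$ the two proofs share nothing: the paper assumes $y_1\le0$, uses $-y\ge0$ to replace $ry_r-y$ by $ry_r$ in \eqref{y-eqn}, and integrates $\tan^{-1}(y_r)$ with the bound $y(r)-y(R_3)\le|\mu|$ to contradict \eqref{y-derivative-limit1}; you instead propagate positivity of $\widetilde g=r+yy_r$ from $r=0$ (where $\widetilde g'(0^+)=1-\frac{1}{n\lambda}>0$ by \eqref{y-2nd-derivative-at-origin-positive}) via the first-zero computation $\widetilde g'(r_*)=(1+y_r^2)\left(n-\frac1\lambda\right)>0$, deduce $y>-r/y_r$ and hence $y_1\ge0$, and exclude $y_1=0$ through the endpoint derivative $g'(y_1^-)=n-\frac1\lambda>0$; all three computations check out. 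As for what each approach buys: the paper's finiteness proof is shorter and quantitative, and its lower-bound argument needs only $\lambda<\frac{1}{n-1}$, whereas your lower bound genuinely consumes $\lambda>\frac1n$ (twice); on the other hand, your route avoids the paper's final step of applying $\tan$ to the inequality $\tan^{-1}(y_r(r))\le\tan^{-1}(y_r(R_3))+\frac{2(\lambda^{-1}-(n-1))}{r_1}|\mu|+\frac2\lambda$, a step that is only conclusive when the right-hand side stays below $\frac{\pi}{2}$ --- something the constants there do not obviously guarantee, since $\frac2\lambda>2(n-1)\ge2>\frac{\pi}{2}$. In that respect your sign-preservation argument for $\widetilde g$ is arguably the more robust of the two.
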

\begin{proof}
By \eqref{y-1st-2nd-derivatives-positive}, $y_1:=\underset{\substack{r\to r_1}}{\lim}y(r)\in (\mu,+\infty]$ exists. We first claim that $y_1>0$. Suppose not. Then $y_1\le 0$ and 
\begin{equation}\label{y-negative}
\mu<y(r)<y_1\le 0\quad\forall 0<r<r_1.
\end{equation}
By \eqref{y-derivative-limit1} there exists a constant $R_2\in (0,r_1)$ such that
\begin{equation}\label{yr-lower-bd}
y_r(r)\ge 1\quad\forall R_2<r<r_1.
\end{equation}
Hence by \eqref{y-ivp-problem2}, \eqref{y-negative} and \eqref{yr-lower-bd},
\begin{align}\label{yrr-yr-ineqn2}
\frac{y_{rr}}{1+y_r^2}\le&-\frac{n-1}{r}y_r+\frac{1+y_r^2}{\lambda ry_r}
=\frac{\lambda^{-1}-(n-1)}{r}y_r+\frac{1}{\lambda ry_r}\quad\forall R_2<r<r_1\notag\\
\le&\frac{2\left(\lambda^{-1}-(n-1)\right)}{r_1}y_r+\frac{2}{\lambda r_1}\quad\forall R_3<r<r_1
\end{align}
where $R_3=\max (R_2,r_1/2)$. Integrating \eqref{yrr-yr-ineqn2} over $(R_3,r)$, by \eqref{y-negative},
\begin{align*}
&\tan^{-1}(y_r(r))\le\tan^{-1}(y_r(R_3))+\frac{2\left(\lambda^{-1}-(n-1)\right)}{r_1}|\mu|+\frac{2}{\lambda}\quad\forall R_3<r<r_1\notag\\
\Rightarrow\quad&y_r(r)\le\tan\left(\tan^{-1}(y_r(R_3))+\frac{2\left(\lambda^{-1}-(n-1)\right)}{r_1}|\mu|+\frac{2}{\lambda}\right)\quad\forall R_3<r<r_1
\end{align*}
which contradicts \eqref{y-derivative-limit1}. Hence $y_1>0$. 

It remains to prove that $y_1<+\infty$.
Since $r(y)$ is the inverse of $y(r)$ and $r_y(y)=1/y_r(y)$, $r_{yy}(y)=-y_{rr}(r)/y_r(r)^3$, by  
Lemma \ref{local-y-property-lem} and Lemma \ref{local-maximal-existence-lem} the function 
$r(y)\in C^2(\mu,y_1)\cap C([0,y_1))$ satisfies \eqref{r-eqn} in $(\mu,y_1)$,
\begin{equation}\label{r-initial-value}
r(\mu)=0,r_y(\mu)=\lim_{y\searrow\mu}r_y(y)=+\infty, 
\end{equation}
\begin{equation}\label{r-r-derivative-expression-positive3}
r(y)-yr_y(y)>0\quad\forall \mu<y<y_1,
\end{equation}
and
\begin{equation}\label{r''-negative2}
r_{yy}(y)<0,\quad r_y(y)>0\quad\forall \mu<y<y_1.
\end{equation}
By \eqref{r-eqn} and \eqref{r''-negative2},
\begin{align}\label{r-yy-ineqn10}
&\frac{r_{yy}(y)}{1+r_y(y)^2}\le\frac{n-1}{r(y)}-\frac{1}{\lambda r(y)}\le-\frac{c_4}{r_1}\qquad\quad\forall 0<y<y_1\notag\\
\Rightarrow\quad&\frac{c_4}{r_1}y\le\frac{c_4}{r_1}y+\tan^{-1}(r_y(y))\le \tan^{-1}(r_y(0))\quad\forall 0<y<y_1\notag\\
\Rightarrow\quad&y_1\le \frac{r_1}{c_4}\tan^{-1}(r_y(0))
\end{align}
where $c_4=\lambda^{-1}-(n-1)>0$.  Hence \eqref{y1-defn} holds and the lemma follows.
\end{proof}

Since $r(y)$ is the inverse of $y(r)$, by Lemma \ref{local-maximal-existence-lem} and Lemma \ref{y-limit-positive-bd} we have the following result.

\begin{lem}\label{local-maximal-r-existence-lem}
For any $n\ge 2$, $\frac{1}{n}<\lambda<\frac{1}{n-1}$, $\mu<0$,  there exists a constant $y_1\in\Rr^+$ and a unique solution $r(y)\in C^2(\mu,y_1)\cap C([0,y_1))$ of \eqref{r-eqn} in $(\mu,y_1)$ which satisfies
\eqref{r-initial-value}, \eqref{r-r-derivative-expression-positive3}, \eqref{r''-negative2}
and
\begin{equation}\label{r-derivative=0}
\lim_{y\to y_1}r_y(y)=0,\quad r_1:=\lim_{y\to y_1}r(y)\in (0,+\infty).
\end{equation}
\end{lem}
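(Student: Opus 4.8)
The plan is to obtain $r(y)$ directly as the inverse function of the solution $y(r)$ constructed in Lemma~\ref{local-maximal-existence-lem}, so that essentially all the analytic work has already been done and only a careful change of variables remains.

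First I would record that $y(r)\in C^1([0,r_1))\cap C^2(0,r_1)$ is, by \eqref{y-1st-2nd-derivatives-positive}, strictly increasing on $[0,r_1)$, with $y(0)=\mu$ and, by Lemma~\ref{y-limit-positive-bd}, $\lim_{r\to r_1}y(r)=y_1\in(0,+\infty)$. Hence $y$ is a strictly increasing bijection of $[0,r_1)$ onto $[\mu,y_1)$, and I would define $r(y)$ to be its inverse. Continuity of $r$ on $[\mu,y_1)$ (in particular at $y=\mu$, giving $r(\mu)=0$) is immediate, and since $y_r(r)>0$ on $(0,r_1)$ the inverse function theorem yields $r\in C^2(\mu,y_1)$ together with the relations $r_y(y)=1/y_r(r)$ and $r_{yy}(y)=-y_{rr}(r)/y_r(r)^3$ evaluated at $r=r(y)$.

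With these relations in hand each of the asserted properties would follow by inspection. From $y_r(0)=0$ I get $r_y(\mu)=\lim_{y\searrow\mu}r_y(y)=\lim_{r\to 0}1/y_r(r)=+\infty$, which together with $r(\mu)=0$ gives \eqref{r-initial-value}. The inequalities \eqref{y-1st-2nd-derivatives-positive} give $r_y=1/y_r>0$ and $r_{yy}=-y_{rr}/y_r^3<0$ on $(\mu,y_1)$, which is \eqref{r''-negative2}; rewriting $r-yr_y=(ry_r-y)/y_r$ and invoking \eqref{y-y-derivative-ineqn} gives \eqref{r-r-derivative-expression-positive3}. For \eqref{r-derivative=0} I use \eqref{y-derivative-limit1} to get $\lim_{y\to y_1}r_y(y)=\lim_{r\to r_1}1/y_r(r)=0$, while $\lim_{y\to y_1}r(y)=r_1\in(0,+\infty)$ by construction. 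That $r(y)$ actually satisfies \eqref{r-eqn} on $(\mu,y_1)$ follows from the fact that $y(r)$ solves \eqref{y-eqn} together with the equivalence of \eqref{r-eqn} and \eqref{y-eqn} noted in the Introduction; I would substitute the derivative relations above to confirm this directly.

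Finally, uniqueness of $r(y)$ would be deduced from the uniqueness in Lemma~\ref{local-maximal-existence-lem}: any solution of \eqref{r-eqn} with the listed monotonicity and positivity properties is invertible, and its inverse solves \eqref{y-ivp-problem2} while satisfying \eqref{y-y-derivative-ineqn}, forcing it to coincide with $y(r)$. I do not expect a genuine obstacle here; the only points needing care are the endpoint limits—verifying that $r_y$ genuinely blows up at $y=\mu$ and vanishes at $y=y_1$ rather than merely being monotone—and checking that the inverse function theorem applies on the full open interval $(\mu,y_1)$ despite the degeneracy $y_r(0)=0$ at the left endpoint.
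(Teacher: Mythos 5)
Your proposal is correct and is essentially identical to the paper's argument: the paper obtains this lemma in one line by observing that $r(y)$ is the inverse of the function $y(r)$ from Lemma~\ref{local-maximal-existence-lem}, using the relations $r_y(y)=1/y_r(r)$ and $r_{yy}(y)=-y_{rr}(r)/y_r(r)^3$ together with Lemma~\ref{y-limit-positive-bd}, exactly as you do. Your write-up simply makes explicit the monotonicity, endpoint-limit, and uniqueness details that the paper leaves implicit.
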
  

\begin{lem}\label{local-r-existence-extension-lem}
For any $n\ge 2$, $\frac{1}{n}<\lambda<\frac{1}{n-1}$, $\mu<0$.  Let $y_1\in\Rr^+$ and  $r(y)$ be given by Lemma \ref{local-maximal-r-existence-lem}. Then there exists a constant $\delta_1>0$ such that $r(y)\in C^2(\mu,y_1+\delta_1)\cap C([0,y_1+\delta_1))$ is a solution of   
\eqref{r-eqn} in $(\mu,y_1+\delta_1)$ which satisfies
\eqref{r-initial-value}, 
\begin{equation}\label{r-r-derivative-expression-positive4}
r(y)-yr_y(y)>0\quad\forall \mu<y<y_1+\delta_1,
\end{equation}
\begin{equation}\label{r''-negative2-1}
r_{yy}(y)<0\quad\forall \mu<y<y_1+\delta_1
\end{equation}
and
\begin{equation}\label{first-derivative-ineqn2}
\left\{\aligned
&r_y(y)>0\quad\forall\mu<y<y_1\\
&r_y(y_1)=0\\
&r_y(y)<0\quad\forall y_1<y<y_1+\delta_1. 
\endaligned\right.
\end{equation}
\end{lem}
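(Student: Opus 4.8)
The plan is to recognize that the initial-value problem for \eqref{r-eqn} is \emph{regular} at $y=y_1$, so that the solution furnished by Lemma \ref{local-maximal-r-existence-lem} can be continued past $y_1$ by classical ODE theory, after which all the stated inequalities will follow from continuity. To see the regularity, note first that by \eqref{r-derivative=0} the quantities $r(y_1):=r_1$ and $r_y(y_1):=0$ are well-defined finite limits with $r_1>0$, so that the two denominators appearing in \eqref{r-eqn} satisfy $r(y_1)=r_1>0$ and
\begin{equation*}
r(y_1)-y_1r_y(y_1)=r_1-0=r_1>0.
\end{equation*}
Hence, writing \eqref{r-eqn} as the explicit second order equation $r_{yy}=G(y,r,r_y)$ with
\begin{equation*}
G(y,r,p)=(1+p^2)\left[\frac{n-1}{r}-\frac{1+p^2}{\lambda(r-yp)}\right],
\end{equation*}
the function $G$ is smooth (indeed real-analytic) in a neighbourhood of the point $(y_1,r_1,0)$, since there $r>0$ and $r-yp>0$.

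First I would apply the standard local existence and uniqueness theorem for the first order system $r'=p$, $p'=G(y,r,p)$ with data $(r(y_1),r_y(y_1))=(r_1,0)$ to obtain, for some $\varepsilon>0$, a unique $C^2$ solution of \eqref{r-eqn} on $(y_1-\varepsilon,y_1+\varepsilon)$. By the same uniqueness theorem applied on $(y_1-\varepsilon,y_1)$, this local solution coincides there with the solution of Lemma \ref{local-maximal-r-existence-lem} (both satisfy \eqref{r-eqn} and share the limiting values $(r_1,0)$ at $y_1$). Gluing the two across $y_1$ therefore produces a genuine $C^2$ extension of $r(y)$ to $(\mu,y_1+\delta_1)$, where at first $\delta_1=\varepsilon$ and I will shrink $\delta_1$ as needed below. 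Positivity $r(y)>0$ and the inequality \eqref{r-r-derivative-expression-positive4} on the enlarged interval are then immediate from continuity: both $r$ and $r-yr_y$ equal $r_1>0$ at $y_1$, so they remain positive on a possibly smaller interval $(\mu,y_1+\delta_1)$, the portion to the left of $y_1$ being already covered by \eqref{r-r-derivative-expression-positive3}.

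It remains to establish \eqref{r''-negative2-1} and \eqref{first-derivative-ineqn2}. Evaluating $G$ at $(y_1,r_1,0)$ gives
\begin{equation*}
r_{yy}(y_1)=\frac{n-1}{r_1}-\frac{1}{\lambda r_1}=\frac{1}{r_1}\left(n-1-\frac1\lambda\right)<0,
\end{equation*}
where the strict inequality uses $\lambda<\frac1{n-1}$. By continuity of $r_{yy}$ I may shrink $\delta_1$ so that $r_{yy}(y)<0$ on $[y_1,y_1+\delta_1)$; together with \eqref{r''-negative2} this yields \eqref{r''-negative2-1} on all of $(\mu,y_1+\delta_1)$. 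Finally, since $r_y(y_1)=0$ and $r_y'=r_{yy}<0$ on $(y_1,y_1+\delta_1)$, the function $r_y$ is strictly decreasing there and hence $r_y(y)<r_y(y_1)=0$ for $y_1<y<y_1+\delta_1$; combined with $r_y>0$ on $(\mu,y_1)$ from \eqref{r''-negative2} this gives \eqref{first-derivative-ineqn2}. The whole argument is essentially routine once the regularity of \eqref{r-eqn} at $y_1$ is observed; the only points demanding care are the verification that the two singular denominators of \eqref{r-eqn} stay bounded away from zero at $y_1$, which is exactly what $r_1>0$ guarantees, and the uniqueness-based matching that ensures the glued function is genuinely $C^2$ across $y_1$.
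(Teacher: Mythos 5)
Your proof is correct, but the continuation step takes a genuinely different route from the paper's. The paper first derives a quantitative bound: from \eqref{r''-negative2} it computes $(r(y)-yr_y(y))_y=-yr_{yy}(y)>0$ on $(0,y_1)$, hence $r(0)\le r(y)-yr_y(y)\le r_1$ there, and then invokes Lemma 2.2 of \cite{H2} together with the proof of Theorem 1.1 of \cite{H2} to extend $r(y)$ past $y_1$ while preserving \eqref{r-r-derivative-expression-positive4}. You instead observe that the limits \eqref{r-derivative=0} place the data $(y_1,r_1,0)$ at an interior point of the region where the right-hand side $G(y,r,p)$ of \eqref{r-eqn} is smooth (both denominators equal $r_1>0$ there), so classical Picard--Lindel\"of existence/uniqueness plus the standard ODE continuation theorem give the $C^2$ extension directly. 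This replaces the paper's appeal to external machinery from \cite{H2} with a self-contained elementary argument; what it costs you is the need to justify the uniqueness-based gluing across $y_1$, which you handle correctly (the old solution, extended by its limiting values at $y_1$, solves the same backward initial value problem as the new local solution, hence coincides with it, so the glued function agrees with the new local solution near $y_1$ and is $C^2$ there). From that point on --- evaluating $r_{yy}(y_1)=\left(n-1-\lambda^{-1}\right)/r_1<0$, shrinking $\delta_1$ to get \eqref{r''-negative2-1}, and deducing \eqref{first-derivative-ineqn2} from $r_y(y_1)=0$ and the strict monotonicity of $r_y$ --- your argument and the paper's are identical.
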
  
\begin{proof}
By \eqref{r''-negative2},
\begin{align}\label{r-r-derivative-expression-lower-bd5}
&(r(y)-yr_y(y))_y=-yr_{yy}(y)>0\quad\forall 0<y<y_1\notag\\
\Rightarrow\quad&r_1\ge r(y)-yr_y(y)\ge r(0)>0\quad\forall 0\le y<y_1
\end{align}
By \eqref{r-r-derivative-expression-positive3}, \eqref{r''-negative2} and \eqref{r-r-derivative-expression-lower-bd5}, Lemma 2.2 of \cite{H2} and the proof of Theorem 1.1 of \cite{H2}, there exists a constant $\delta_1>0$ such that $r(y)$ can be extended to a function $r(y)\in C^2(\mu,y_1+\delta_1)\cap C([0,y_1+\delta_1))$ which satisfies   
\eqref{r-eqn} in $(\mu,y_1+\delta_1)$ such that \eqref{r-initial-value},  \eqref{r-r-derivative-expression-positive4} holds. By \eqref{r-derivative=0}, $r_y(y_1)=0$. Hence by \eqref{r-eqn},
\begin{equation*}
r_{yy}(y_1)=\left(n-1-\frac{1}{\lambda}\right)\frac{1}{r_1}<0.
\end{equation*}
Then by decreasing the value of $\delta_1$ if necessary we can choose $\delta_1>0$  sufficiently small such that \eqref{r''-negative2-1} holds. Then \eqref{first-derivative-ineqn2} holds and the lemma follows.
\end{proof}

By an argument similar to the proof of Lemma 2.4 of \cite{H2} we have the following result.

\begin{lem}\label{r-monotone-lemma2}
Let $n\ge 2$, $\lambda>0$, $\mu<0$, $r_0>0$, $R_1>0$ and $R_2>0$. Suppose $r(y)\in C^2([R_1,R_2))$ is a solution of \eqref{r-eqn}  in $(R_1,R_2)$ which satisfies 
\begin{equation*}
r(y)-yr_y(y)>0\quad\forall R_1\le y<R_2
\end{equation*} 
and
\begin{equation}\label{r-lower-bd6}
r(y)\ge r_0\quad\forall R_1\le y<R_2.
\end{equation}
Then there exist a constant $\delta_0>0$  such that
\begin{equation}\label{f-structure-ineqn5}
r(y)-yr_y(y)\ge\delta_0\quad\forall R_1<y<R_2.
\end{equation}
\end{lem}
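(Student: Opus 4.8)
The plan is to study the auxiliary function $f(y):=r(y)-yr_y(y)$, which is positive on $[R_1,R_2)$ by hypothesis and belongs to $C^1([R_1,R_2))$ since $r\in C^2([R_1,R_2))$, and to show it satisfies a differential inequality that forbids it from approaching $0$. First I would differentiate to get the clean identity $f'(y)=-yr_{yy}(y)$, and then substitute the equation \eqref{r-eqn}, rewritten as $r_{yy}=(1+r_y^2)\big[\frac{n-1}{r}-\frac{1+r_y^2}{\lambda f}\big]$, to obtain
\begin{equation*}
f'(y)=y\,(1+r_y(y)^2)\left[\frac{1+r_y(y)^2}{\lambda f(y)}-\frac{n-1}{r(y)}\right].
\end{equation*}

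The key observation is that the bracket is positive precisely when $r(1+r_y^2)>\lambda(n-1)f$. Since $y\ge R_1>0$, $1+r_y^2\ge 1$, and $r\ge r_0$ by \eqref{r-lower-bd6}, this self-improving structure emerges: setting $M:=\frac{r_0}{\lambda(n-1)}$, we have
\begin{equation*}
f(y)<M\quad\Longrightarrow\quad r(1+r_y^2)\ge r_0>\lambda(n-1)f\quad\Longrightarrow\quad f'(y)>0.
\end{equation*}
In other words, $M$ acts as a lower barrier: wherever $f$ drops below $M$ it is strictly increasing, so $f$ can only decrease at levels already at least $M$. This uses only $\lambda>0$ and $n\ge 2$, consistent with the hypotheses.

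To convert the barrier into the uniform bound \eqref{f-structure-ineqn5}, I would set $\delta_0:=\tfrac12\min\{f(R_1),M\}>0$ and argue by contradiction. If $f(\bar y)<\delta_0$ for some $\bar y\in(R_1,R_2)$, let $y_0:=\inf\{y\in[R_1,R_2):f(y)<\delta_0\}$. Because $f(R_1)\ge\delta_0$ and $f$ is continuous, $y_0>R_1$ and $f(y_0)=\delta_0$, while $f\ge\delta_0$ on $[R_1,y_0)$; moreover there exist $y_k\searrow y_0$ with $f(y_k)<\delta_0=f(y_0)$, forcing $f'(y_0)\le 0$. On the other hand $f(y_0)=\delta_0<M$, so the implication above gives $f'(y_0)>0$, a contradiction. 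Hence $f\ge\delta_0$ on $[R_1,R_2)$, which proves the lemma.

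I do not anticipate a serious obstacle: the entire content is the recognition that the singular term $-\frac{(1+r_y^2)^2}{\lambda f}$ appearing in $r_{yy}$ contributes a large \emph{positive} amount to $f'=-yr_{yy}$ exactly as $f\searrow0$, so the evolution of $f$ is pushed upward before it can vanish. The only point requiring mild care is the strict-versus-nonstrict inequality at the barrier level, which is why I choose $\delta_0$ strictly below $M$ (the factor $\tfrac12$): this guarantees $f(y_0)<M$ at the critical point, and hence the strict sign $f'(y_0)>0$ needed to close the contradiction rather than merely $f'(y_0)\ge 0$.
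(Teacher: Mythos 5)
Your proof is correct, and it is essentially the same argument the paper relies on (the paper gives no details, deferring instead to the proof of Lemma 2.4 of \cite{H2}): the identity $f'(y)=-yr_{yy}(y)$ combined with \eqref{r-eqn}, $r\ge r_0$ and $y\ge R_1>0$ shows that $f'>0$ wherever $f<r_0/(\lambda(n-1))$, and a first-crossing (barrier) argument then gives the uniform lower bound. Your write-up simply supplies the details that the paper delegates to \cite{H2}.
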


We are now ready for the proof of Theorem \ref{existence-thm}.

\noindent{\bf Proof of Theorem \ref{existence-thm}}: 

\noindent\underline{\bf Existence}: Let $r_1>0$, $y_1>0$ and $\delta_1>0$, be given by Lemma \ref{local-maximal-existence-lem},
 Lemma \ref{y-limit-positive-bd} and Lemma \ref{local-r-existence-extension-lem} respectively. By Lemma 2.6 there exists a maximal interval $(\mu,y_0)$, $y_0\ge y_1+\delta_1$, such that there exists a solution $r(y)\in C^2(\mu,y_0)\cap C([\mu,y_0))$  of \eqref{r-eqn} in $(\mu,y_0)$ which satisfies \eqref{r-initial-value},  \eqref{r''-negative2-1}, \eqref{first-derivative-ineqn2} and
 \begin{equation}\label{r-r-derivative-expression-positive5}
r(y)-yr_y(y)>0\quad\forall \mu<y<y_0.
\end{equation}
We first claim that 
\begin{equation}\label{r-1st-derivative-negative}
r_y(y)<0\quad\forall y_1<y<y_0.
\end{equation}
Suppose the claim \eqref{r-1st-derivative-negative} does not hold. There there exists $z_0\in (y_1,y_0)$ such that
$r_y(z_0)\ge 0$. Hence by \eqref{first-derivative-ineqn2} there exists a maximal interval $(y_1,z_1)$, $y_1<z_1\le z_0$, such that
\begin{align}\label{r-1st-derivative-negative5}
&r_y(y)<0\quad\forall y_1<y<z_1\quad\mbox{ and }\quad r_y(z_1)=0\notag\\
\Rightarrow\quad&r_{yy}(z_1)\ge 0.
\end{align}
On the other hand by \eqref{r-eqn},
\begin{equation*}
r_{yy}(z_1)=\left(n-1-\frac{1}{\lambda}\right)\frac{1}{r(z_1)}<0
\end{equation*} 
which contradicts \eqref{r-1st-derivative-negative5}. Hence no such $z_0$ exists and the claim \eqref{r-1st-derivative-negative} follows. Note that by \eqref{r-1st-derivative-negative}, 
\begin{equation*}
a_1:=\lim_{y\to y_0}r(y)\quad\mbox{ exists and }\quad 0\le a_1<r_1.
\end{equation*}

\noindent $\underline{\text{\bf Claim 1}}$: $\exists y_2\in (y_1,y_0)$ such that $r_{yy}(y_2)=0$ and $r_{yy}(y)<0$ for any $\mu<y<y_2$.

\noindent
Suppose claim 1 is false. Then by \eqref{r''-negative2-1}, 
\begin{equation}\label{r-2nd-derivative-sign11}
r_{yy}(y)<0\quad\forall \mu<y<y_0.
\end{equation}
Thus
\begin{equation}\label{a2-defn}
a_2:=\lim_{y\to y_0}r_y(y)\quad\mbox{ exists }
\end{equation}
and $a_2\in [-\infty,0)$. Let $y_3=y_1+\frac{\delta_1}{2}$. Then by \eqref{first-derivative-ineqn2} and \eqref{r-2nd-derivative-sign11},
\begin{align}
&r_y(y)\le r_y(y_3)<0\qquad\qquad\quad\forall y_3\le y<y_0\notag\\
\Rightarrow\quad&r(y)\le (y-y_3)r_y(y_3)+r(y_3)\quad\forall y_3\le y<y_0.\label{r-ineqn1}
\end{align}
If $y_0>(r(y_3)/|r_y(y_3)|)+y_3$, then by \eqref{r-ineqn1},
\begin{equation*}
r(y)<0\quad\forall (r(y_3)/|r_y(y_3)|)+y_3\le y<y_0
\end{equation*}
and contradiction arises. Hence
\begin{equation*}
y_0\le (r(y_3)/|r_y(y_3)|)+y_3<+\infty.
\end{equation*}
We now divide the proof of claim 1 into 3 cases.

\noindent $\underline{\text{\bf Case 1}}$: $a_1\ge 0$ and $a_2=-\infty$.

\noindent Since
\begin{equation*}
\lim_{y\to y_0}\frac{1+r_y(y)^2}{(r(y)-yr_y(y))r_y(y)}=-\frac{1}{y_0},
\end{equation*}
there exists a constant $y_4\in (y_1,y_0)$ such that
\begin{align}\label{r'-expressive-upper-lower-bd}
&\frac{1+r_y(y)^2}{(r(y)-yr_y(y))r_y(y)}\ge-\frac{2}{y_0}\qquad\quad\forall y_4<y<y_0\notag\\
\Rightarrow\quad&-\frac{1+r_y(y)^2}{\lambda(r(y)-yr_y(y))}\ge\frac{2}{\lambda y_0}r_y(y)\quad\,\forall y_4<y<y_0.
\end{align}
By \eqref{r-eqn}, \eqref{r-1st-derivative-negative} and \eqref{r'-expressive-upper-lower-bd},
\begin{align*}
&\frac{r_{yy}(y)}{1+r_y(y)^2}\ge\frac{2}{\lambda y_0}r_y(y) \qquad\qquad\qquad\forall y_4<y<y_0\notag\\
\Rightarrow\quad&\tan^{-1}(r_y(y))\ge \tan^{-1}(r_y(y_4))-\frac{2r_1}{\lambda y_0}\quad\forall y_4<y<y_0\notag\\
\Rightarrow\quad& a_2\ge\tan\left(\tan^{-1}(r_y(y_4))-\frac{2r_1}{\lambda y_0}\right)>-\infty\quad\quad\mbox{as }y\to y_0
\end{align*}
and contradiction arises. Hence case 1 does not hold.

\noindent $\underline{\text{\bf Case 2}}$: $a_1>0$ and $0>a_2>-\infty$.

\noindent Then by \eqref{r-2nd-derivative-sign11},
\begin{equation}\label{r'-lower-bd7}
0>r_y(y)>a_2\quad\forall y_1<y<y_0.
\end{equation}
By Lemma \ref{r-monotone-lemma2} there exists a constant $\delta_0>0$ such that \eqref{f-structure-ineqn5} holds with $R_1=y_1$ and $R_2=y_0$.
Then by \eqref{f-structure-ineqn5} with $R_1=y_1$, $R_2=y_0$, \eqref{r'-lower-bd7}, Lemma 2.2 of \cite{H2} and the proof of Theorem 1.1 of \cite{H2}, there exists a constant $\delta_2>0$ such that $r(y)$ can be extended to a solution $r(y)\in C^2(\mu,y_0+\delta_2)\cap C([\mu,y_0+\delta_2))$ of \eqref{r-eqn} in $(\mu,y_0+\delta_2)$ such that \eqref{r-initial-value} and  \eqref{r-r-derivative-expression-positive3} holds for any $\mu<y<y_0+\delta_2$. This contradicts the maximality of the interval $(\mu, y_0)$. Hence case 2 does not hold.

\noindent $\underline{\text{\bf Case 3}}$: $a_1=0$ and $0>a_2>-\infty$.

\noindent By \eqref{r-eqn},
\begin{align}
&\frac{1}{1+a_2^2}\lim_{y\to y_0}r(y)r_{yy}(y)=\lim_{y\to y_0}\frac{r(y)r_{yy}(y)}{1+r_y(y)^2}=n-1\label{r-r''-limit12}\\
\Rightarrow\quad&\lim_{y\to y_0}r_{yy}(y)=+\infty\notag
\end{align}
which  contradicts \eqref{r-2nd-derivative-sign11}. Hence case 3 does not hold.

Thus \eqref{r-2nd-derivative-sign11} does not hold. Hence  claim 1 holds. 

Differentiating \eqref{r-eqn}, by \eqref{r-1st-derivative-negative} and claim 1,
\begin{align}\label{r-3rd-derivative-eqn}
&\frac{r_{yyy}(y)}{1+r_y(y)^2}=\frac{2r_yr_{yy}^2}{(1+r_y^2)^2}-\frac{n-1}{r^2}r_y-\frac{2r_yr_{yy}}{\lambda(r-yr_y)}-\frac{y(1+r_y^2)r_{yy}}{\lambda(r-yr_y)^2}\quad\forall y\in (\mu,y_0)\notag\\
\Rightarrow\quad&\frac{r_{yyy}(y_2)}{1+r_y(y_2)^2}=-(n-1)\frac{r_y(y_2)}{r(y_2)^2}>0.
\end{align} 
Hence by \eqref{r-3rd-derivative-eqn} there exists a constant $y_5\in (y_2,y_0)$ such that
\begin{equation}\label{r''-positive2}
r_{yy}(y)>0\quad\forall y_2<y<y_5.
\end{equation}
Let $y_6=\sup\{a\in (y_2,y_0): r_{yy}(y)>0\quad\forall y_2<y<a\}$. Then by \eqref{r''-positive2} $y_6$ exists and $y_5\le y_6\le y_0$. If $y_6<y_0$, then by the definition of $y_6$, $r_{yy}(y_6)=0$ and $r_{yyy}(y_6)\le 0$. On the other hand by the same argument as before,
\begin{equation*}
\frac{r_{yyy}(y_6)}{1+r_y(y_6)^2}=-(n-1)\frac{r_y(y_6)}{r(y_6)^2}>0
\end{equation*}
and contradiction arises. Hence $y_6=y_0$. Thus
\begin{align}
&r_{yy}(y)>0\qquad\qquad\forall y_2<y<y_0\label{r''-positive3}\\
\Rightarrow\quad&r_y(y_2)<r_y(y)<0\quad\forall y_2<y<y_0.\label{r'-uniform-bd}
\end{align}
Hence \eqref{a2-defn} holds and $a_2\in (-\infty,0]$.

\noindent $\underline{\text{\bf Claim 2}}$:  $y_0=+\infty$. 

Suppose claim 2 is false. Then $y_0<+\infty$. By \eqref{r'-uniform-bd} and an argument similar to the proof of case 2 above we get that $a_1=0$. 

Suppose $a_2\in (-\infty,0)$. By \eqref{r-eqn}, \eqref{r-r''-limit12} holds. Then by \eqref{r-r''-limit12} there exists a constant $y_7\in (y_2,y_0)$ such that
\begin{align}
&r(y)r_{yy}(y)\ge c_5\quad\forall y_7<y<y_0\notag\\
\Rightarrow\quad&r_yr_{yy}\le c_5\frac{r_y}{r}\qquad\forall y_7<y<y_0\label{r'-r''-upper-bd3}
\end{align}
where $c_5=(n-1)(1+a_2^2)/2$.
Integrating \eqref{r'-r''-upper-bd3} over $(y_7,y)$,
\begin{align*}
&r_y(y)^2\le r_y(y_7)^2+2c_5\log (r(y)/r(y_7))\quad\forall y_7<y<y_0\notag\\
\Rightarrow\quad&a_2^2\le -\infty\quad\mbox{ as }y\to y_0
\end{align*}
and contradiction arises. Hence $a_2\not\in (-\infty,0)$. Thus $a_2=0$.

By \eqref{r''-positive3} we can divide the proof of claim 2 into three cases.

\noindent $\underline{\text{\bf Case (i)}}$: $\lim_{y\to y_0}r_{yy}(y)=0$.

\noindent Then by \eqref{r-eqn},
\begin{equation*}
0=\lim_{y\to y_0}\frac{r(y)r_{yy}(y)}{1+r_y(y)^2}=n-1-\frac{1}{\lambda}\lim_{y\to y_0}\frac{r(y)}{r(y)-yr_y(y)}=n-1-\frac{1}{\lambda\left(1-y_0\,\underset{\substack{y\to y_0}}{\lim}(r_y(y)/r(y))\right)}.
\end{equation*}
Hence
\begin{equation}\label{r'-r-ratio-limit}
\lim_{y\to y_0}\frac{r_y(y)}{r(y)}=-c_6
\end{equation}
where $c_6=((\lambda (n-1))^{-1}-1)/y_0>0$. By \eqref{r'-r-ratio-limit} there exists a constant $y_8\in (y_2,y_0)$ such that
\begin{align*}
&\frac{r_y(y)}{r(y)}\ge -2c_6\qquad\qquad\qquad\qquad\qquad\forall y_8<y<y_0\\
\Rightarrow\quad&\log r(y)\ge\log r(y_8)-2c_6(y_0-y_8)\quad\,\forall y_8<y<y_0\\
\Rightarrow\quad& a_1\ge r(y_8)e^{-2c_6(y_0-y_8)}>0\qquad\qquad\quad\,\mbox{ as }y\to y_0
\end{align*}
and contradiction arises. Hence case (i) does not hold.

\noindent $\underline{\text{\bf Case (ii)}}$: $\lim_{y\to y_0}r_{yy}(y)=+\infty$.

\noindent
By \eqref{r-eqn} and the l'Hospital rule,
\begin{equation*}
\lim_{y\to y_0}r(y)r_{yy}(y)=n-1-\frac{1}{\lambda}\lim_{y\to y_0}\frac{r(y)}{r(y)-yr_y(y)}=n-1-\frac{1}{\lambda}\lim_{y\to y_0}\frac{r_y(y)}{-yr_{yy}(y)}=n-1.
\end{equation*}
Hence there exists a constant $y_9\in (y_2,y_0)$ such that
\begin{align*}
&r(y)r_{yy}(y)\ge \frac{n-1}{2}\quad\forall y_9<y<y_0\notag\\
\Rightarrow\quad&r_yr_{yy}\le\frac{n-1}{2r}r_y\quad\forall y_9<y<y_0\notag\\
\Rightarrow\quad&r_y(y)^2\le r_y(y_9)^2+(n-1)\log\, (r(y)/r(y_9))\quad\forall y_9<y<y_0\notag\\
\Rightarrow\quad&a_2^2\le -\infty\quad\mbox{ as }y\to y_0
\end{align*}
and contradiction arises. Hence case (ii) does not hold.

\noindent $\underline{\text{\bf Case (iii)}}$: $\exists\mbox{  a sequence } \{z_i\}_{i=1}^{\infty}\subset (y_2,y_0), z_i\to y_0$ as $i\to\infty$, such that 
\begin{equation}\label{r''-limit-positive}
a_3:=\lim_{i\to\infty}r_{yy}(z_i)\in (0,+\infty).
\end{equation}

\noindent
Then by \eqref{r-eqn} and \eqref{r''-limit-positive},
\begin{equation*}
0=\lim_{i\to\infty}\frac{r(z_i)r_{yy}(z_i)}{1+r_y(z_i)^2}=n-1-\frac{1}{\lambda}\lim_{i\to\infty}\frac{r(z_i)}{r(z_i)-z_ir_y(z_i)}=n-1-\frac{1}{\lambda}\lim_{i\to\infty}\frac{r_y(z_i)}{-z_ir_{yy}(z_i)}=n-1
\end{equation*}
and contradiction arises. Hence case (iii) does not hold.

Hence claim 2 holds. Thus there exists a solution $r(y)\in C^2(\mu,+\infty)\cap C([\mu,+\infty))$ of \eqref{r-initial-value-problem} which satisfies \eqref{r-r-derivative-expression-positive}.

\noindent\underline{\bf Uniqueness}: Suppose $r_1(y), r_2(y)\in C^2(\mu,+\infty)\cap C([\mu,+\infty))$ are two solutions of \eqref{r-initial-value-problem} which satisfy
\eqref{r-r-derivative-expression-positive}. Let $y_1(r), y_2(r)$ be the inverse of $r_1(y), r_2(y)$, in a small neighbourhood of $y=\mu$. Let $R_0>0$ by given by Lemma \ref{local-existence-lem}.  Then by decreasing $R_0>0$ if necessary we get that $y_1(r), y_2(r)\in C^1([0,R_0])\cap C^2((0,R_0])$ satisfy \eqref{y-ivp-problem1} and \eqref{y-y-derivative-ineqn}. Hence by Lemma \ref{local-existence-lem},
\begin{align}
y_1(r)=y_2(r)\quad\forall 0<r\le R_0\quad\Rightarrow\quad&r_1(y)=r_2(y)
\qquad\forall\mu\le y\le y_1(R_0)\label{r12=}\\
\Rightarrow\quad&r_{1,y}(y)=r_{2,y}(y)\quad\forall\mu\le y\le y_1(R_0).\label{r12'=}
\end{align}
By \eqref{r-initial-value-problem}, \eqref{r12=}, \eqref{r12'=} and standard ODE theory,
\begin{equation}\label{r12-=near-end}
r_1(y)=r_2(y)\quad\forall y\ge y_1(R_0).
\end{equation} 
By \eqref{r12=} and \eqref{r12-=near-end}, 
\begin{equation*}
r_1(y)=r_2(y)\quad\forall y\ge\mu
\end{equation*} 
and the theorem follows.

{\hfill$\square$\vspace{6pt}}

\noindent{\bf Proof of Theorem \ref{asymptotic-property-thm}}: 
We first observe that by the proof of Theorem \ref{existence-thm} there exist constants $y_2>y_1>0$ such that both \eqref{first-derivative-ineqn} and \eqref{second-derivative-ineqn} holds. So we only need to prove
\eqref{r'-limit-as-y-infty} and \eqref{r-limit-as-y-infty}.
By \eqref{first-derivative-ineqn},
\begin{equation*}
a_1:=\lim_{y\to +\infty}r(y)\quad
\mbox{ exists and }\quad 0\le a_1<+\infty.
\end{equation*}
For any $y>2y_2$, by \eqref{second-derivative-ineqn} and the mean value theorem there exists $\xi\in (y/2,y)$ such that
\begin{equation}\label{yr'-bd}
r(y)-r(y/2)=(z/2)r_y(\xi)<(y/2)r_y(y)<0.
\end{equation}
Letting $y\to\infty$ in \eqref{yr'-bd}, we get \eqref{r'-limit-as-y-infty}.
Suppose $a_1>0$. 
Then by \eqref{r-initial-value-problem} and \eqref{r'-limit-as-y-infty},
\begin{equation}\label{r''-negative-limit}
\lim_{y\to +\infty}r_{yy}(y)=\lim_{y\to +\infty}\frac{r_{yy}(y)}{1+r_y(y)^2}=\left(n-1-\frac{1}{\lambda}\right)\frac{1}{a_1}<0.
\end{equation}
By \eqref{r''-negative-limit} there exists $y_{10}>y_2$ such that
\begin{equation*}
r_{yy}(y)<0\quad\forall y>y_{10}
\end{equation*}
which contradicts \eqref{second-derivative-ineqn}. Hence $a_1=0$ and the theorem follows.

{\hfill$\square$\vspace{6pt}}

Since \eqref{r-eqn} is symmetric with respect to $y$, by Lemma 2.1 of \cite{H2} and an argument similar to proof of Theorem \ref{existence-thm} and Theorem \ref{asymptotic-property-thm} we get the following result which is stated without proof in \cite{DLW}.

\begin{thm}(cf. Theorem 20 of \cite{DLW})
For any $n\ge 2$, $\frac{1}{n}<\lambda<\frac{1}{n-1}$, $r_1>0$, there exists a unique even solution $r(y)\in C^2(-\infty,+\infty)$ of 
\begin{equation*}
\left\{\aligned
&\frac{r_{yy}(y)}{1+r_y(y)^2}=\frac{n-1}{r(y)}-\frac{1+r_y(y)^2}{\lambda(r(y)-yr_y(y))},\quad r(y)>0,\quad\mbox{ in }(-\infty,+\infty)\\
&r(0)=r_1,r_y(0)=0 
\endaligned\right.
\end{equation*}
which satisfies
\begin{equation*}
r(y)-yr_y(y)>0\quad\forall y\in\Rr
\end{equation*}
and
\begin{equation*}
\left\{\aligned
&r_y(y)>0\quad\forall y<0\\
&r_y(y)<0\quad\forall y>0. 
\endaligned\right.
\end{equation*}
Moreover there exists a constant $y_2>0$ such that
\begin{equation*}
\left\{\aligned
&r_{yy}(y)<0\quad\forall |y|<y_2\\
&r_{yy}(\pm y_2)=0\\
&r_{yy}(y)>0\quad\forall |y|>y_2, 
\endaligned\right.
\end{equation*}
\begin{equation*}
\lim_{|y|\to\infty}yr_y(y)=0,\quad \lim_{|y|\to\infty}r(y)=0.
\end{equation*}
\end{thm}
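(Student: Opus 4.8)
The plan is to exploit the reflection symmetry of \eqref{r-eqn} under $y\mapsto-y$. If $r(y)$ solves \eqref{r-eqn} on an interval $I$, then $\4 r(y):=r(-y)$ solves \eqref{r-eqn} on $-I$: indeed $\4 r_y(y)=-r_y(-y)$ and $\4 r_{yy}(y)=r_{yy}(-y)$, so the left side and the first term $\frac{n-1}{\4 r(y)}$ transform correctly, while $\4 r(y)-y\4 r_y(y)=r(-y)+yr_y(-y)$ is exactly the quantity $r(z)-zr_z(z)$ evaluated at $z=-y$. Hence it suffices to construct a solution on $[0,+\infty)$ with the asserted monotonicity, concavity and decay, and then to extend it to $(-\infty,0]$ by even reflection.

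First I would construct the solution on $[0,+\infty)$. By Lemma 2.1 of \cite{H2} there are a constant $\delta>0$ and a local solution $r(y)\in C^2([0,\delta))$ of \eqref{r-eqn} with $r(0)=r_1$, $r_y(0)=0$ satisfying $r(y)-yr_y(y)>0$. Evaluating \eqref{r-eqn} at $y=0$ gives $r_{yy}(0)=(n-1-\lambda^{-1})/r_1<0$, since $\lambda<\frac{1}{n-1}$. Thus the point $y=0$ here plays precisely the role of the point $y_1$ in the proof of Theorem \ref{existence-thm}: the first derivative vanishes there, the second derivative is strictly negative there, and for small $y>0$ the solution is decreasing and concave. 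I would then repeat, essentially verbatim, the continuation argument from the existence part of Theorem \ref{existence-thm} (Claim 1, Claim 2 and the case analysis therein) with $y_1$ replaced by $0$, to conclude that the maximal interval of existence is $[0,+\infty)$, that $r_y(y)<0$ for all $y>0$, and that there is a unique $y_2>0$ with $r_{yy}(y)<0$ for $0<y<y_2$, $r_{yy}(y_2)=0$ and $r_{yy}(y)>0$ for $y>y_2$. The decay statements $\lim_{y\to+\infty}r(y)=0$ and $\lim_{y\to+\infty}yr_y(y)=0$ then follow by copying the proof of Theorem \ref{asymptotic-property-thm}: the mean value theorem together with $r_{yy}>0$, $r_y<0$ on $(y_2,+\infty)$ forces $yr_y(y)\to0$, and if $\lim_{y\to+\infty}r(y)=a_1>0$ then \eqref{r-eqn} would give $r_{yy}\to(n-1-\lambda^{-1})/a_1<0$, contradicting $r_{yy}>0$ on $(y_2,+\infty)$, so $a_1=0$.

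Finally I would reflect. Setting $r(y):=r(-y)$ for $y<0$ produces, by the symmetry above, an even solution of \eqref{r-eqn} on all of $\Rr$. To see it is $C^2$ across $y=0$, note that because $r_y(0)=0$ the one-sided limits of the (odd) first derivative agree at $0$, while the one-sided limits of the (even) second derivative agree at $0$ with common value $(n-1-\lambda^{-1})/r_1$; hence $r\in C^2(-\infty,+\infty)$. All sign, monotonicity and limit assertions for $y<0$ follow from those for $y>0$ by reflection, giving $r_y(y)>0$ for $y<0$, $r_{yy}(\pm y_2)=0$, $r_{yy}(y)<0$ for $|y|<y_2$, $r_{yy}(y)>0$ for $|y|>y_2$, and $\lim_{|y|\to\infty}r(y)=0$, $\lim_{|y|\to\infty}yr_y(y)=0$. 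Uniqueness is immediate: any even $C^2$ solution with $r(0)=r_1$, $r_y(0)=0$ is determined on $[0,+\infty)$ by Lemma 2.1 of \cite{H2} and standard ODE uniqueness, and its values on $(-\infty,0)$ are then forced by evenness.

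I expect the main obstacle to be the global continuation step, namely showing the maximal interval is all of $[0,+\infty)$ rather than a finite interval; as in the proof of Theorem \ref{existence-thm}, this requires ruling out, near a putative finite endpoint $y_0$, the degenerate possibilities that $r$, $r_y$, or $r_{yy}$ blows up, or that $r$ reaches $0$ prematurely, which is handled by the same case analysis ($a_2=-\infty$; $a_2$ finite with $a_1>0$; $a_1=0$; and the sub-cases on $\lim_{y\to y_0}r_{yy}$) carried over from that proof. By contrast, the $C^2$ matching at $y=0$ and the uniqueness are routine.
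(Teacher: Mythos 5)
Your proposal is correct and takes essentially the same route as the paper: the paper's own proof of this theorem is exactly the one-line observation that \eqref{r-eqn} is invariant under $y\mapsto -y$, so that Lemma 2.1 of \cite{H2} (local existence with $r(0)=r_1$, $r_y(0)=0$) combined with the continuation and asymptotic arguments of Theorem \ref{existence-thm} and Theorem \ref{asymptotic-property-thm}, with $y_1$ replaced by $0$, gives the result. Your write-up merely fills in the details of this same plan (even reflection, $C^2$ matching at $y=0$, uniqueness), so there is nothing to correct.
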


\end{document}